\documentclass{article}
\pdfoutput=1
\usepackage[utf8]{inputenc}
\usepackage{amsthm}
\usepackage{amsmath}
\usepackage{amsfonts}
\usepackage{amssymb}

\usepackage{graphicx}
\usepackage{color}
\usepackage{todonotes}
\usepackage{hyperref}

\theoremstyle{plain}
\newtheorem{theorem}{Theorem}
\newtheorem{proposition}[theorem]{Proposition}
\newtheorem{lemma}[theorem]{Lemma}
\newtheorem{corollary}[theorem]{Corollary}

\theoremstyle{definition}

\newcommand*{\bT}{\mathbb{T}}
\newcommand*{\bR}{\mathbf{R}}
\newcommand*{\bC}{\mathbf{C}}
\newcommand*{\bN}{\mathbf{N}}

\newcommand*{\cR}{\mathcal{R}}

\DeclareMathOperator{\capacity}{\mathrm{cap}}

\DeclareMathOperator{\sign}{\mathrm{sign}}

\DeclareMathOperator{\interior}{\mathrm{Int}}

\begin{document}

\title{Higher Markov and Bernstein inequalities and fast decreasing polynomials with prescribed zeros}
\author{Sergei Kalmykov and  B\'ela Nagy}

\maketitle

\begin{abstract}
Higher order Bernstein- and Markov-type  inequalities are established for trigonometric polynomials on 
compact subsets of the real line 
and algebraic polynomials on compact subsets of 
the unit circle. 
In the case of Markov-type inequalities we assume that 
the compact set satisfies an interval condition. 

Keywords:  trigonometric polynomials, algebraic polynomials,  Bernstein-type inequalities, equilibrium measure, Green's function, fast decreasing polynomials.

Classification (MSC 2010): 41A17, 30C10 
\end{abstract}

\section{Introduction}
Two of the most classical polynomial inequalities are the  Bernstein inequality
(see \cite{BorweinErdelyi}, p.~233 Theorem 5.1.7 or \cite{MMR}, p.~532, Theorem 1.2.5)
\begin{equation*}
\left|P'_n(x)\right|
\le 
\frac{n}{\sqrt{1-x^2}}\|P_n\|_{[-1,1]}, 
\quad x\in (-1,1),
\end{equation*}
and the Markov inequality (see
\cite{BorweinErdelyi}, p.~233 Theorem 5.1.8 
or \cite{MMR}, p.~529 Theorem 1.2.1)
\begin{equation*}
\|P'_n\|_{[-1,1]}\le n^2 \|P_n\|_{[-1,1]},
\end{equation*}
where $P_n$ is an algebraic polynomial of degree of at most $n$, and $\|\cdot\|_{X}$ denotes the sup-norm over the set $X$. 
For a trigonometric polynomial $T_n$ of the degree at most $n$ the following Bernstein-type inequality holds 
(established by M.~Riesz, see \cite{MMR}, p.~532 Theorem 1.2.4 or 
\cite{BorweinErdelyi}, p.~232 Theorem 5.1.4)
\begin{equation*}
\|T'_n\|_{[0,2\pi]}
\le n \|T_n\|_{[0,2\pi]}.
\end{equation*}

There is also an analogue of this inequality for trigonometric polynomials on an interval less than the period 
see \cite{BorweinErdelyi} p.~243.
In 2001, Totik 
developed 
the method of polynomial inverse images to prove an asymptotically 
sharp Bernstein- and Markov-type inequalities for algebraic polynomials on several intervals \cite{Totik2001}, 
and in \cite{IMRN} asymptotically sharp  inequalities were also obtained for
trigonometric polynomials  on several intervals and for algebraic polynomials on several circular arcs on the complex plane. 
The case of one circular arc was 
considered 
earlier
in \cite{NT2013}.
In recently published paper \cite{CAOT} algebraic polynomials on
sets satisfying \eqref{intervalcondition} were considered,
for trigonometric polynomials, see \cite{MarkovInterval}.
The next step in generalization of these result was done in \cite{TZ}, 
where 
asymptotic higher order Markov-type inequalities for algebraic polynomials on compact sets satisfying (\ref{intervalcondition}) 
were established. 

The purpose of the present paper is to extend these results to trigonometric polynomials and 
to algebraic polynomials on subsets of the unit circle
and to present a new type of fast decreasing polynomials. 
Briefly, the approach of Totik-Zhou \cite{TZ}
was to establish the Markov-type inequality for T-sets, 
then for general sets and 
use Fa\`a di Bruno's formula and 
Remez inequality near interior critical points.
The difference here is that we developed 
fast decreasing polynomials with prescribed zeros to deal with interior critical points. 
Moreover, we also establish Bernstein-type inequality.

Sharp higher order Markov-type inequality is established for sets
satisfying the interval condition \eqref{intervalcondition}.
At interior points sharp 
Bernstein-type inequality is also derived which involves much 
slower 
growth order 
($O(n^{2k})$ at endpoints vs. $O(n^k)$ at interior points where $k$-th   derivatives are considered).

The structure of the paper is the following. 
First, notation is introduced, 
and some known, basic results about T-sets are  mentioned.
Then the important density results (for T-sets and regular sets) are recalled.
New results are in Section  \ref{sec:newresults}.
A construction of fast decreasing polynomials with prescribed zeros can also be found here. 
A preliminary, "rough" Markov- and Bernstein-type inequalities are needed for special sets.
Then asymptotically sharp Markov-type inequality is formulated for higher 
derivatives of trigonometric polynomials 
and for algebraic polynomials on subsets of the unit circle. 
Finally, asymptotically sharp Bernstein-type inequalities are established in the trigonometric case as well as in the algebraic case.

\section{Notation, background}
We denote by 
$\bR$ 
the real line, by 
$\bC$ 
the complex plane, 
by $\overline{\bC}$ 
the extended complex plane, 
and by 
$\mathbb{T}$ the unit circle 
and by $\mathbf{N}$ the nonnegative integers. 

We use Fa\`a di Bruno's formula  (or Arbogast's formula; see \cite{KrantzParks}, p. 17 or \cite{Riordan}, pp. 35-37
or \cite{FdBJohnson}): 
if $f$ and $g$ are $k$ times differentiable functions, then
\begin{equation}
\label{faadibruno}
\frac{d^k}{dx^k} f(g(x)) =
\sum \frac{k!}{m_1!m_2!\ldots m_k!}
f^{(m_1+m_2+\ldots+m_k)}(g(x))
\prod_{j=1}^k \left(
\frac{g^{(j)}(x)}{j!}
\right)^{m_j}
\end{equation}
where the summation is for all nonnegative integers $m_1,m_2,\ldots,m_k$ such that
\begin{equation*}
1 m_1 +2 m_2 +\ldots + k m_k=k.
\end{equation*}

\medskip

Let $E\subset[-\pi,\pi)$ be a set which is closed in $[-\pi,\pi)$.
Since we do not consider $E=[-\pi,\pi)$ 
(it is classical), 
we may assume that $E\subset (-\pi,\pi)$.
We consider the corresponding set on the unit circle
\begin{equation*}
E_{\mathbb{T}} := \left\{ \exp(it): \  t\in E\right\}.
\end{equation*}

We use the interval condition:
a compact set $E\subset (-\pi,\pi)$ satisfies the interval condition at $a\in E$ 
if there is a $\rho>0$ such that
\begin{equation}
[a-2\rho,a]\subset E \mbox{ and } 
(a,a+2\rho)\cap E = \emptyset.
\label{intervalcondition}
\end{equation}

We use potential theory, for a background, we refer to \cite{Ransford} or \cite{SaffTotik}.
For a compact set $K\subset \bC$, its capacity is denoted by $\capacity (K)$.
If $\capacity (K)>0$, then the equilibrium measure is denoted by $\nu_K$.
It is known that 
if $K\subset \bR$ is a compact set 
$\nu_K$ is absolutely continuous with respect to Lebesgue measure at interior points of $K$ 
and its density is denoted by $\omega_K(t)$.
It is 
also 
known that if $E\subset (-\pi,\pi)$ 
satisfies the interval condition at a  point $a\in E$, then $\sqrt{|t-a|} \omega_E(t)$ has a finite, positive limit as $t\rightarrow a$.
Similarly, we say that the compact set 
$K\subset \bT$ 
satisfies the interval condition at $e^{ia}$ 
where $a\in (-\pi,\pi)$
if $K=E_\bT$ and for some $E$, $E$ satisfies the interval condition at $a$.
Furthermore, if $K$ satisfies the interval condition at $e^{ia}$ ($a\in(-\pi,\pi)$), then $\sqrt{|e^{it}-e^{ia}|} \omega_K(e^{it})$
has a finite, positive limit as $t\rightarrow a$ too.
Hence we introduce
\begin{gather*}
\Omega(E,a):=\lim_{t\rightarrow a} \sqrt{|t-a|}\,\omega_E(t),
\\
\Omega(K,e^{ia}):=\lim_{t\rightarrow a} \sqrt{|e^{it}-e^{ia}|}\,\omega_K(e^{it}).
\end{gather*}
It is worth noting that $\Omega(.,.)$ is monotone with respect to the set, that is,
if $E_1\subset E_2\subset [-\pi,\pi)$, and both satisfy the interval condition at $a$, 
then $\Omega(E_2,a)\le \Omega(E_1,a)$.
Similar assertion holds for the unit circle.

In the finitely many arcs case,
there is a very useful representation of the density of the equilibrium measure 
(see \cite{PeherstorferSteinbauer}, Lemma 4.1 and also formula (5.11)):
let $K=\cup_{j=1}^m \{\exp(it): a_{2j-1}\le t \le a_{2j}\}$ where
$-\pi<a_1<a_2<\ldots<a_{2m-1}<a_{2m}<\pi$ and put $a_{2m+1}:=2\pi+a_1$.
Then there exist $\tau_j\in (a_{2j},a_{2j+1})$, $j=1,\ldots,m$
such that
\begin{equation}
\label{taucondition}
\int_{a_{2j}}^{a_{2j+1}}
\frac{\prod_{j=1}^m (e^{it}-e^{i\tau_j})
}{
\sqrt{\prod_{j=1}^m (e^{it}-e^{i a_{2j-1}})(e^{it}-e^{i a_{2j}})}
} dt=0 
\end{equation}
where, to be definite,  the branch of the square root 
is chosen so that 
$\sqrt{z}\rightarrow\infty$ as $z\in\bR$, $z\rightarrow +\infty$. 
Actually it should hold that 
\[
(-1)^m i \prod_j e^{i\tau_j}=\sqrt{\prod_j e^{i(a_{2j-1}+a_{2j})}}
\]
but actually the other branch would be just as fine, 
since the right hand side in \eqref{taucondition} is $0$.
Then 
\begin{equation*}
\omega(K,e^{it})=\frac{1}{2\pi}
\frac{\prod_{j=1}^m \left|e^{it}-e^{i\tau_j}\right|
}{
\sqrt{\prod_{j=1}^m \left|e^{it}-e^{i a_{2j-1}}\right| \left| e^{it}-e^{i a_{2j}}\right| }
},
\quad t\in\interior K
\end{equation*}
see \cite{PeherstorferSteinbauer}, formula (5.11).
In this case,
\begin{equation*}
\Omega\left(K, 
e^{i a_k}\right)
=
\frac{1}{2\pi}
\frac{\prod_{j=1}^m \left|e^{i a_k}-e^{i\tau_j}\right|
}{
\sqrt{\prod_{j=1,\ldots,2m, j\ne k} \left|e^{i a_k}-e^{i a_{j}}\right|  }
}.
\end{equation*}

\subsection{Density results}

\label{sec:density}

We use special sets on $(-\pi,\pi)$.
A set $E\subset(-\pi,\pi)$ is called T-set, if
\begin{equation}
E=\{ t\in (-\pi,\pi): |U_N(t)|\le 1\}
\label{Tsetdef}
\end{equation}
for some (real) trigonometric polynomial $U_N$ with degree $N$ which
attains $+1$ and $-1$ $2N$-times.
For a background on T-sets, we refer to Section 3 in \cite{IMRN}.

We define
\begin{equation*}
M(E,a_j)=
M_{a_j}:= 
\frac{\prod_{l=1}^{m} 
\left|e^{i a_j}-e^{i \tau_l}\right|^2
}{
\prod_{l=1,\ldots,2m,l\ne j }
\left|e^{i a_j}-e^{i a_l} \right|
}
\end{equation*}
and obviously, 
\begin{equation*}
M(E,a_j)= 4\pi^2 \Omega^2(E_\bT,e^{i a_j}).
\end{equation*}

\smallskip

Now we recall some monotonicity and continuity results regarding $\Omega(E,a)$ and $M(E,a)$.

For any $\varepsilon>0$, by Lemma~3.4 from \cite{IMRN}  (see p. 3001) 
we can choose 
an admissible polynomial $U_N$  
such that the inverse image set 
$E'=(U_N^{-1}[-1,1])\cap [-\pi,\pi]=\cup_{j=1}^m [a'_{2j-1},a'_{2j}]$  consists of $m$ intervals and it lies close to $E$, that is
$|a'_j-a_j|<\varepsilon$ for all $j=1,\ldots,2m$ and $E'\subset E$.
Also we may assume that $a\in E'$.
Again $j_0$ is such that 
$a\in[a'_{2j_0 -1},a'_{2j_0}]$ and actually $a=a'_{2j_0}$. 
 For numbers $\tau_i$ in (\ref{taucondition})  
it is clear that they are  $C^{1}$-functions of the endpoints $a_j$. 
Then with $M'_a:=M(E',a)$,
we have $\lim_{\varepsilon\rightarrow 0}M'_a= M_a$.
By the monotonicity of $\Omega(.,.)$ in the first variable, we immediately have
that $M_a\le M'_a$.

In other words, for any $\varepsilon>0$, there 
exists 
a T-set 
$E'\subset E$, $a\in E'$ such that $\Omega^2(E'_\bT,e^{ia})\le (1+\varepsilon)\Omega^2(E_\bT,e^{ia})$.

\medskip

Consider an arbitrary compact set $E\subset (-\pi,\pi)$ satisfying the 
interval condition \eqref{intervalcondition}, 
and assume that $E$ is not a union of finitely many intervals. 
The set $[-\pi,\pi]\setminus E$ consists of finitely or countably many intervals open in
$[-\pi,\pi]$: 
\begin{equation*}
[-\pi,\pi]\setminus E
=\bigcup_{j=0}^{\infty} I_j
\end{equation*}
To be definite, we assume that $I_0$ contains $(a,a+2\rho)$. 
Further, for $m\ge 0$ we consider the set 
\begin{gather*}
E_m^+=[-\pi,\pi] \setminus \left(\bigcup_{j=0}^{m}
I_j
\right)=
\bigcup_{j=1}^{m'}[a_{j,m'},b_{j.m'}],
\\
a_{1,m'}\le b_{1,m'}<a_{2,m'}\le b_{2,m'}<\dots<a_{m',m'}\le b_{m',m'}=b_{0,m'}
\notag
\end{gather*}
where $m'=m+1$ (note here, by our assumption $E\subset(-\pi,\pi)$).

Obviously, $E_m^+$ contains $E$ and satisfies the interval condition (\ref{intervalcondition}). 
If $a_{j,m'}=b_{j,m'}$ for some $j$, then we replace this degenerated interval by the interval 
\[
[a_{j,m'}-\lambda_m,a_{j,m'}+\lambda_m]\bigcap [-\pi,\pi],
\]
where $\lambda_m<1/m$ is chosen to be so small that the interval condition (\ref{intervalcondition}) is still satisfied. For the set obtained this way we preserve the notation $E_m^{+}$. 

\smallskip

We also use the famous result of Ancona (see \cite{Ancona}).
If $K\subset \bT$ is any compact set,
$\capacity(K)>0$, then for any $\varepsilon>0$ there exists
$K_1\subset K$ compact set which is
regular for the Dirichlet problem and
$\capacity(K)\le \capacity(K_1)+\varepsilon$.
Furhermore, it is easy to see that
if $K$ satisfies the interval condition 
\eqref{intervalcondition},
then $K_1$ can be chosen such that
it satisfies \eqref{intervalcondition} too.
Let $E_m^-$ be the set coming from Ancona's theorem applied to $E_\bT$ with $\varepsilon=1/m$ and also satisfying the interval condition \eqref{intervalcondition}.

\begin{lemma}\label{l:Omegalimit}
For the two sets $E_m^+$ and $E_m^-$
introduced above,
we have  $\Omega\left( (E_m^{\pm})_\bT, e^{ia}\right) \rightarrow \Omega({E}_{\mathbb{T}}, e^{ia})$ 
holds true
as $m\rightarrow \infty$.
\end{lemma}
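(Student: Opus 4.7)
The plan is to prove the two cases $E_m^-$ and $E_m^+$ separately, using the monotonicity of $\Omega(\cdot, e^{ia})$ with respect to set inclusion (recalled just after the definition of $\Omega$) to supply one inequality in each case. For $E_m^- \subset E_\bT$, monotonicity immediately gives $\Omega((E_m^-)_\bT, e^{ia}) \ge \Omega(E_\bT, e^{ia})$. For the matching upper bound, fix $\varepsilon > 0$ and invoke the T-set approximation recalled in the paragraph preceding the lemma: there is a T-set $E' \subset E$ with $a \in E'$ and $\Omega(E'_\bT, e^{ia})^2 \le (1+\varepsilon)\Omega(E_\bT, e^{ia})^2$. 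The set $E'_\bT$ is a finite union of closed arcs, hence regular. Since Ancona's theorem leaves flexibility in the choice of $E_m^-$, I would harmlessly replace it by $E_m^- \cup E'_\bT$: this enlargement is still regular (as a finite union of regular sets), still contained in $E_\bT$, still satisfies the interval condition at $e^{ia}$ with the same $\rho$, and has capacity at least as large as before, so Ancona's estimate is preserved. With $E'_\bT \subset E_m^-$, monotonicity gives
\[
\Omega((E_m^-)_\bT, e^{ia}) \le \Omega(E'_\bT, e^{ia}) \le \sqrt{1+\varepsilon}\,\Omega(E_\bT, e^{ia}),
\]
so letting $\varepsilon \to 0$ finishes this case.

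For $E_m^+$: these sets form a decreasing sequence of finite unions of closed intervals containing $E$, with $\bigcap_m E_m^+ = E$ and the common gap $(a, a+2\rho)$, so monotonicity gives $\Omega((E_m^+)_\bT, e^{ia}) \le \Omega(E_\bT, e^{ia})$; only the matching lower bound on the limit inferior is at stake. I would argue potential-theoretically: standard continuity for decreasing sequences of compacta shows that $g_{(E_m^+)_\bT}$ increases to $g_{E_\bT}$ locally uniformly on compact subsets of $\bC \setminus E_\bT$. The shared interval condition at $e^{ia}$ with the same $\rho$ forces every Green's function in the family to admit the square-root boundary expansion $g_K(e^{it}) = c(K)\sqrt{t-a}\,(1+o(1))$ as $t \to a^+$ along the gap, where $c(K)$ is a universal constant multiple of $\Omega(K, e^{ia})$. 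The locally uniform convergence on the gap, combined with uniform control of this error coming from the common $\rho$, then transfers to convergence of the coefficient and therefore of $\Omega$.

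The main technical obstacle is the last step in the $E_m^+$ case: securing uniformity in $m$ of the $o(1)$ in the square-root expansion so that coefficient extraction commutes with the limit $m \to \infty$. Should this uniformity prove awkward, a backup plan is to apply the T-set approximation to each $E_m^+$ itself, sandwich $\Omega((E_m^+)_\bT, e^{ia})$ between values on T-sets for which the explicit $\tau_j$-formula displayed above is available, and conclude by continuous dependence of the $\tau_j$'s on the arc endpoints, exploiting that the lengths of the further gaps $I_{m+1}, I_{m+2}, \ldots$ vanish as $m \to \infty$.
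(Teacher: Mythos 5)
The paper does not prove this lemma in-house at all: it simply cites \cite{CAOT}, Proposition 2.3, so your attempt has to stand on its own, and as it stands both halves have genuine gaps. For $E_m^-$, your upper bound rests on the claim that for the given compact $E$ there is a T-set $E'\subset E$ with $a\in E'$ and $\Omega(E'_\bT,e^{ia})^2\le(1+\varepsilon)\Omega(E_\bT,e^{ia})^2$. In the paper that density statement is obtained (via Lemma 3.4 of \cite{IMRN}) only when $E$ is a finite union of intervals, whereas in the setting of the lemma $E$ is explicitly assumed \emph{not} to be of that form, and then the claim is simply false in general: a T-set is a finite union of nondegenerate intervals, so if, say, $E=[a-2\rho,a]\cup C$ with $C$ a positive-capacity set of empty interior (Cantor type) placed outside $(a,a+2\rho)$, then every T-set $E'\subset E$ is contained in $[a-2\rho,a]$, hence by monotonicity $\Omega(E'_\bT,e^{ia})\ge\Omega(([a-2\rho,a])_\bT,e^{ia})>\Omega(E_\bT,e^{ia})$, with a gap independent of $\varepsilon$. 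So the inner T-set cannot force $\Omega((E_m^-)_\bT,e^{ia})$ down to $\Omega(E_\bT,e^{ia})$. (The further points that you silently redefine $E_m^-$ and that $\varepsilon$ must be taken as $\varepsilon_m\to0$ are minor by comparison, but note the lemma is stated for the sets as introduced.)

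For $E_m^+$, the monotonicity bound $\Omega((E_m^+)_\bT,e^{ia})\le\Omega(E_\bT,e^{ia})$ is fine, but the entire content of the lemma is the reverse inequality for the liminf, i.e.\ exactly the uniformity in $m$ of the $o(1)$ in the square-root expansion of the Green's function at $e^{ia}$ — and this is precisely what you leave unproven (you acknowledge it as the "main technical obstacle"). This is the substance of the cited Proposition 2.3: one uses that, thanks to the common $\rho$ in \eqref{intervalcondition}, all the sets coincide with the arc corresponding to $[a-2\rho,a]$ near $e^{ia}$, and compares the nonnegative harmonic difference of the Green's functions in that fixed slit neighborhood (a boundary-Harnack type comparison), with the capacity convergence controlling the difference at a fixed reference point; locally uniform convergence in the interior of the gap alone does not yield convergence of the boundary coefficients. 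Your backup plan does not repair this: the explicit $\tau_j$-representation and the $C^1$ dependence of the $\tau_j$'s on the endpoints hold for a \emph{fixed} finite number of arcs, whereas the number of components of $E_m^+$ tends to infinity and the limit set $E$ is not a finite union of arcs, so there is no finite-parameter continuity statement to pass to the limit with; moreover, an inner T-set of $E_m^+$ only produces an \emph{upper} bound for $\Omega((E_m^+)_\bT,e^{ia})$, while it is the lower bound that is at stake. So the key idea needed for the lemma (uniform control of the boundary behaviour near $e^{ia}$, as in \cite{CAOT}) is missing from both halves of your argument.
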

For a proof, see e.g. \cite{CAOT}, p. 1295, Proposition 2.3.

\bigskip

\section{New results}
\label{sec:newresults}

We need fast deceasing polynomials with prescribed zeros
and  rough Markov- and 
Bernstein-type inequalities.

\subsection{Fast decreasing trigonometric 
and algebraic polynomials with prescribed zeros}

\label{sec:fastdecrtrig}

Special 
fast decreasing polynomials  with prescribed zeros
are constructed in this subsection.
First, their existence 
are 
established on the real line, then 
in the
trigonometric case.

We tried to find 
this type of 
fast decreasing polynomials 
in the existing literature (e.g. in \cite{LevinLubinsky}, \cite{IvanovTotik},
\cite{Totik1994}, \cite{TotikChristoffel},\cite{TotikSzego},\cite{TotikVarga}, \cite{KuijlaarsVanAssche} 
and Lemma 4.5 on p. 3012 in \cite{IMRN}),
but we did not find the following two results.
Further, possible applications may include estimates for Christoffel functions, etc.

\begin{theorem}
\label{prop:fdpz}
Let $a_0<a_1<\ldots< a_{l_0}<a'<a<x_0<b<b'<a_{l_0+1}<\ldots <a_l<a_{l+1}$ be fixed 
and $k_0,k_1,\ldots,k_l$ be positive integers.
Put $Z(x):=\prod_{j=1}^l (x-a_j)^{k_j}$.
Then there exists $\delta_1>0$ such that for all large $m$
there exists a polynomial $Q(x)$
with degree at most $m$ such that
\begin{gather}
Q(x_0)=1, \label{fdpz:atxnull}
\\
Q^{(j)}(x_0)=0,\quad
j=1,\ldots,k_0,
\label{fdpz:datxnull}
\\
|Q(x)|<1 \mbox{ if } x\in [a_0,a_{l+1}], x\ne x_0, \label{fdpz:peaking}
\\
\left|Q(x)-1\right|\le \exp(-\delta_1 m) 
\mbox{ for } x\in[a,b],
\label{fdpz:high}
\\
\left|Q(x)\right|\le 
\min\left(1,|Z(x)|\right)
\exp(-\delta_1 m) 
\mbox{ for } x\in[a_0,a']\cup[b',a_{l+1}],
\label{fdpz:low}
\\
Q(x) \mbox{ is strictly monotone on } [a',a]\mbox{ and on }[b,b'],
\label{fdpz:change}
\\
Q^{(k)}(a_j)=0, \quad j=1,\ldots,l, k=0,1,\ldots, k_j,
\label{fdpz:zeros}
\\
Q(x)\ge 0 \mbox{ for } x\in[a_0,a_{l+1}].
\label{fdpz:nonneg}
\end{gather}
\end{theorem}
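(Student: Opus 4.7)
I will construct $Q$ in two stages. The first produces a preliminary nonnegative polynomial $\tilde Q$ of degree at most $m$ satisfying every requirement of the theorem \emph{except} the high-order tangency (\ref{fdpz:datxnull}) at $x_0$. The second stage is the substitution
\[
Q(x) = 1 - \bigl(1 - \tilde Q(x)\bigr)^{2K}, \qquad 2K := 2\lceil (k_0+1)/2\rceil,
\]
which produces the vanishing derivatives while preserving everything else.

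\textbf{Stage 1.} The polynomial $\tilde Q$ is built as a product of three factors. \emph{First,} a standard Ivanov--Totik-type fast decreasing polynomial $\pi_m$ of degree at most $m/3$ on $[a_0,a_{l+1}]$ (cf.\ Lemma~4.5 of~\cite{IMRN}) that is nonnegative, satisfies $\pi_m(x_0)=1$ with a strict quadratic max, is monotone on $[a',a]$ and on $[b,b']$, $|\pi_m-1|\le e^{-c_0 m}$ on $[a,b]$, and $\pi_m\le e^{-c_0 m}$ on $[a_0,a']\cup[b',a_{l+1}]$. \emph{Second,} the fixed-degree polynomial
\[
\Psi(x) = \prod_{j=1}^{l}\left(\frac{x-a_j}{x_0-a_j}\right)^{2(k_j+1)},
\]
which is nonnegative, equals $1$ at $x_0$, vanishes to order $2(k_j+1)\ge k_j+1$ at each $a_j$, and obeys $\Psi(x)\le C|Z(x)|^2$ on $[a_0,a_{l+1}]$. \emph{Third,} the square $\Xi_m^2$ of a polynomial $\Xi_m$ of degree at most $m/3$ that approximates $\Psi^{-1/2}$ on a neighborhood $U\supset [a',b']$ disjoint from $\{a_1,\ldots,a_l\}$; since $\Psi$ has a polynomial square root $R(x)=\prod_j \bigl((x-a_j)/(x_0-a_j)\bigr)^{k_j+1}$ that is positive on $U\subset(a_{l_0},a_{l_0+1})$, the function $\Psi^{-1/2}=1/R$ is analytic and nonzero on $U$, and Bernstein--Walsh theory produces $\Xi_m$ with $\|\Xi_m - 1/R\|_U\le e^{-c_1 m}$ and $\|\Xi_m\|_{[a_0,a_{l+1}]}\le C_\Xi$ (by enclosing $[a_0,a_{l+1}]$ in the Bernstein ellipse but excluding the $a_j$). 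Set
\[
\tilde Q(x) = \frac{\pi_m(x)\,\Psi(x)\,\Xi_m(x)^2}{\Xi_m(x_0)^2}.
\]
Then $\tilde Q$ is a nonnegative polynomial of degree $\le m$ with $\tilde Q(x_0)=1$, zeros of order $\ge 2(k_j+1)$ at each $a_j$, $|\tilde Q-1|\le e^{-cm}$ on $[a,b]$ (because $\Psi\Xi_m^2=1+O(e^{-c_1 m})$ on $U$ and $\pi_m=1+O(e^{-c_0 m})$ on $[a,b]$), $|\tilde Q|\le C'e^{-c_0 m}|Z(x)|^2$ on $[a_0,a']\cup[b',a_{l+1}]$, and monotone on the transitions $[a',a]$, $[b,b']$ (since $\pi_m$ is monotone there and $\Psi\Xi_m^2$ is exponentially close to $1$ with controlled derivative on $U$).

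\textbf{Stage 2.} The substitution $Q=1-(1-\tilde Q)^{2K}$ is checked property by property. Evenness of $2K$ together with $\tilde Q\in[0,2]$ on $[a_0,a_{l+1}]$ (which follows from Stage~1 for large $m$) gives $Q\in[0,1]$, yielding (\ref{fdpz:nonneg}). Since $\tilde Q(x_0)=1$, the factor $(1-\tilde Q)^{2K}$ vanishes to order $\ge 2K\ge k_0+1$ at $x_0$, giving (\ref{fdpz:datxnull}). The plateau bound (\ref{fdpz:high}) reads $|Q-1|=|1-\tilde Q|^{2K}\le e^{-2Kcm}$ on $[a,b]$. The identity $1-(1-t)^{2K}=t\sum_{j=0}^{2K-1}(1-t)^j$ gives $|Q|\le 2K\,\tilde Q$, so $|Q|\le Ce^{-cm}|Z|^2\le C'e^{-cm}|Z|$ on the outside by boundedness of $|Z|$, proving (\ref{fdpz:low}). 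The zeros (\ref{fdpz:zeros}) at $a_j$ follow from $Q\sim 2K\,\tilde Q$ near $a_j$; the monotonicity (\ref{fdpz:change}) from $Q'=2K(1-\tilde Q)^{2K-1}\tilde Q'$ combined with $1-\tilde Q>0$ off $\{x_0\}$; and (\ref{fdpz:peaking}) from the equivalence $Q(x)=1\Leftrightarrow\tilde Q(x)=1$ together with a strict-max argument.

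\textbf{Main obstacle.} The technical core is Stage~1, namely coordinating the rate constants and degree budgets of $\pi_m$ and $\Xi_m$ so that $\Psi\Xi_m^2$ is exponentially close to $1$ on a neighborhood $U$ large enough to contain $[a',b']$ (forcing the monotonicity of $\pi_m$ to transfer to $\tilde Q$ on the transition intervals) while staying uniformly bounded on $[a_0,a_{l+1}]$. A secondary subtlety is verifying that $\tilde Q=1$ only at $x_0$, which relies on the strict quadratic maximum of $\pi_m$ outweighing the exponentially small deviation of $\Psi\Xi_m^2$ from $1$ in a tiny neighborhood of $x_0$.
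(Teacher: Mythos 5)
The construction has a genuine gap at the strict properties \eqref{fdpz:peaking} and \eqref{fdpz:change}, and your proposed fix for it does not work. Write $F:=\Psi\,\Xi_m^2/\Xi_m(x_0)^2$, so $\tilde Q=\pi_m F$ with $F(x_0)=1$ but, generically, $F'(x_0)\ne 0$ (you only know $\|F-1\|_U\le e^{-cm}$, which by Markov's inequality makes $F'$ exponentially small, not zero). Since $\pi_m(x_0)=1$ and $\pi_m'(x_0)=0$, you get $\tilde Q(x_0)=1$ and $\tilde Q'(x_0)=F'(x_0)$, so $\tilde Q$ crosses the level $1$ transversally at $x_0$ and exceeds $1$ on one side; as $\tilde Q$ is small far from $x_0$, it must return through the value $1$ at some $x_1\ne x_0$, and there $Q(x_1)=1-(1-\tilde Q(x_1))^{2K}=1$, violating \eqref{fdpz:peaking}. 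Your "strict quadratic maximum outweighs the exponentially small deviation" argument fails exactly where it is needed: at distance $r$ from $x_0$ the deficit of $\pi_m$ is of order $c_m r^2$, which is smaller than the allowed deviation $e^{-cm}$ of $F$ throughout an exponentially small window around $x_0$. The same uncontrolled perturbation also undermines \eqref{fdpz:change}: $\tilde Q'=\pi_m'F+\pi_m F'$, and near the endpoints $a$, $b$ of the transition intervals $\pi_m'$ is only known to be nonnegative (it can be far smaller than the sign-unknown term $\pi_mF'=O(m^2e^{-cm})$), while $Q'=2K(1-\tilde Q)^{2K-1}\tilde Q'$ acquires the sign-changing factor $(1-\tilde Q)^{2K-1}$ wherever $\tilde Q$ crosses $1$. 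A second, independent problem is the claim $\|\Xi_m\|_{[a_0,a_{l+1}]}\le C_\Xi$ "by enclosing $[a_0,a_{l+1}]$ in the Bernstein ellipse but excluding the $a_j$": this is impossible, since any Bernstein ellipse for $U$ containing $a_0$ and $a_{l+1}$ contains the whole segment and hence the poles $a_j$ of $1/R$; geometric accuracy on $U$ with uniformly bounded approximants on the big interval cannot be obtained this way. (This particular point is repairable: take $\deg\Xi_m=\epsilon m$ with $\epsilon$ small, accept Bernstein--Walsh growth $e^{C\epsilon m}$ outside $U$, and beat it with $\pi_m\le e^{-c_0m}$; similarly the omitted degree bookkeeping in Stage 2, where $\deg Q\approx 2K\deg\tilde Q$, is fixable by rescaling the degree budget. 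The loss of \eqref{fdpz:peaking} and \eqref{fdpz:change}, however, is structural to the multiplicative perturbation and needs a new idea.)

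For comparison, the paper avoids exactly these difficulties by not building $Q$ multiplicatively: it sets $S(x)=C_1\int_{a_1}^x Z_1(t)P_1(t)R(t)(t-x_0)^{k_0'}\,dt$, where $Z_1$ carries even-order zeros at the $a_j$, $P_1$ is a convex combination of two polynomial bumps centered in $(a',a)$ and $(b,b')$, and $R$ has free zeros $\tau_j$ in the gaps; the Poincar\'e--Miranda theorem is used to choose the $\tau_j$ and the mixing parameter $\lambda$ so that $S(a_j)=0$ for all $j$ (which automatically forces the higher-order vanishing), and finally $Q=S^2$. Because the integrand is single-signed on $(a_{l_0},x_0)$ and on $(x_0,a_{l_0+1})$, the strict peak at $x_0$ and the strict monotonicity on $[a',a]$ and $[b,b']$ come for free, which is precisely what your $\tilde Q=\pi_m F$ construction cannot guarantee. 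If you want to keep your two-stage scheme, you would need to replace the multiplicative correction $F$ by a mechanism that makes $x_0$ a genuine interior maximum of $\tilde Q$ with $\tilde Q<1$ elsewhere (for instance by building in the derivative structure at $x_0$ exactly, as the paper does through the factor $(t-x_0)^{k_0'}$ under the integral), not merely a function that agrees with $1$ at the single point $x_0$ up to exponentially small sup-norm error.
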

\begin{proof}
In this proof several new pieces of notation are introduced which are used here only and constants are not
redefined from line to line in this proof just for sake of convenience.

Consider $S$, which will be a polynomial satisfying all but one properties, in the form
\begin{equation}
S(x)= C_1 \int_{a_1}^x Z_1(t)\, P_1(t)\, R(t) (t-x_0)^{k_0'}\, dt
\label{Qrepr}
\end{equation}
where
\begin{gather*}
Z_1(t):=\prod_{j=1}^{l} (t-a_j)^{k_j'},
\quad
R(\underline{\tau}; t)=R(t):= \prod_{\substack{j=1\\ j\ne l_0}}^{l-1}(t-\tau_j)
\\
P_0(t)=P_0(\delta,\mu;t):=\left(1-\left(\frac{x-\delta}{c_2}\right)^2\right)^\mu \\
P_1(\alpha,\beta,\lambda,\mu;t)=
(1-\lambda)
P_0(\alpha,\mu;t)
+
\lambda
P_0(\beta,\mu;t)
\end{gather*}
and where $k_0'=k_0$ if $k_0$ is odd
and $k_0'=k_0+1$ if $k_0$ is even,
and for $j=1,\ldots,l$, 
$k_j'=k_j$ if $k_j$ is even and
$k_j'=k_j+1$ if $k_j$ is odd,
and $\tau_j\in [a_j,a_{j+1}]$, $j=1,\ldots,l-1$, $j\ne l_0$,
and $a'<\alpha<a<b<\beta<b'$, 
$\alpha:=(a+a')/2$, $\beta:=(b+b')/2$ and
$\mu$ is large positive integer and 
$c_2:=a_{l+1}-a_0$,
$\lambda\in[0,1]$.
If some of the parameters are fixed or unimportant in the current consideration, then we leave them out, e.g.
$P_0(t)=P_0(\delta,\mu;t)$ and
$P_1(t)=P_1(\mu;t)=P_1(\lambda,\mu;t)=P_1(\alpha,\beta,\lambda,\mu;t)$.

\medskip

The key observation is that
if $S(a_j)=0$ for some $j$, then we immediately have that $S^{(k)}(a_j)=0$, $k=0,1,2,\ldots,k_j$.

\medskip

Some obvious properties immediately follow from the definitions:
$Z_1(t)\ge 0$ (this is why we increased the "multiplicities"),
$P_0(t),P_1(t) \ge 0$ too, $\max\limits_{a_0\le t \le a_{l+1}} P_1(t)\ge 1/2$.
Furthermore, the degree of $R$ is 
$l-2$ 
and $R$ has the same sign over $(a',b')$.
For simplicity, denote $\underline{\tau}_1:=(\tau_1,\ldots,\tau_{l_0-1})$, $\underline{\tau}_2:=(\tau_{l_0+1},\ldots,\tau_l)$ 
and (slightly abusing the notation)
$\underline{\tau}:=(\underline{\tau}_1,\underline{\tau}_2)=(\tau_1,\ldots,\tau_{l_0-1},\tau_{l_0+1},\ldots,\tau_l)$
and $(\underline{\tau}_1,\lambda,\underline{\tau}_2)
:=
(\tau_1,\ldots,\tau_{l_0-1},\lambda,\tau_{l_0+1},\ldots,\tau_l)$.
Finally, the degree of $S$ is
$k_1'+\ldots+k_l'+2\mu+l-2+k_0'+1=2\mu+const$.

Poincaré-Miranda theorem (see e.g. \cite{Kulpa}, p. 547 or \cite{OutereloRuiz}, pp. 152-153) helps to find a solution so that
$S$ vanishes at all prescribed $a_j$'s.
In detail, 
put $\cR:=[a_1,a_2]\times \ldots \times [a_{l_0-1},a_{l_0}]\times [0,1]\times[a_{l_0+1},a_{l_0+2}]\times\ldots\times[a_{l-1},a_l]$
and for  $j=1,\ldots,l$
let $f_j: \cR \rightarrow\bR$,
\begin{equation*}
f_j(\underline{\tau}_1,\lambda,\underline{\tau}_2):=
\int_{a_j}^{a_{j+1}} Z_1(t)
P_1(\lambda,\mu;t) \, 
R(\underline{\tau};t)
(t-x_0)^{k_0'}\, 
dt.
\end{equation*}
Now we verify the signs of these functions on opposite sides of $\cR$:
 if $j=1,\ldots,l$, $j\ne l_0$, then
$A_j:=\{(\tau_1,\ldots,\tau_{l_0-1},\lambda, \tau_{l_0+1},\ldots,\tau_l)\in\cR:
\tau_j=a_j\}$
and $B_j:=\{(\tau_1,\ldots,\tau_{l_0-1},\lambda, \tau_{l_0+1},\ldots,\tau_l)\in\cR:
\tau_j=a_{j+1}\}$
are the opposite sides.
We have to treat the case $j<l_0$ and the case $j>l_0$ separately.
If $(\underline{\tau}_1,\lambda,\underline{\tau}_2)\in A_j$, then 
$R(t)$ has the same sign all over $(a_j,a_{j+1})$ and 
$\sign f_j(\underline{\tau}_1,\lambda,\underline{\tau}_2)=
\sign R(t)(t-x_0)^{k_0'} 
=(-1)^{l-1-j+k_0'}=(-1)^{l-j}$ if $j<l_0$ and 
$\sign f_j(\underline{\tau}_1,\lambda,\underline{\tau}_2)=\sign R(t)=(-1)^{l-1-j}$ if $j>l_0$.
On the other 
side, 
if $(\underline{\tau}_1,\lambda,\underline{\tau}_2)\in B_j$,
then this means that we move $\tau_j$ from $a_j$ to $a_{j+1}$
hence the sign of $R(t)$ changes.
That is, 
the sign of $R(t)$ 
is the same as that of $f_j(\underline{\tau}_1,\lambda,\underline{\tau}_2)$,
hence 
if $j<l_0$, then
$\sign f_j(\underline{\tau}_1,\lambda,\underline{\tau}_2)=
\sign R(t)(t-x_0)^{k_0'} 
=(-1)^{l-j+1}$
and if $j>l_0$, then
$\sign f_j(\underline{\tau}_1,\lambda,\underline{\tau}_2)=(-1)^{l-j}$,
which shows the sign change in both cases (when $j=1,\ldots,l_0-1$ and when $j=l_0+1,\ldots,l$).

As regards $j=l_0$, 
we estimate $Z_1(t)$ and $R(t)$ first.
Let $\rho_1:=1/4 \min(a-a',x_0-a,b-x_0,b'-b)>0$.
Considering $Z_1(t)$, it is easy to see that there exists $C_3>0$ such that 
for all $t\in[\alpha-\rho_1,\alpha+\rho_1]\cup[\beta-\rho_1,\beta+\rho_1]$ we have 
$1/C_3\le Z_1(t)\le C_3$.
The family of 
possible  polynomials $R(\underline{\tau};t)$
also has this property:
there exists $C_4>0$ such that for 
any $(\underline{\tau}_1,\lambda,\underline{\tau}_2)\in\cR$,
and for any $t\in  [\alpha-\rho_1,\alpha+\rho_1]\cup[\beta-\rho_1,\beta+\rho_1]$ we have 
$1/C_4 \le |R(\underline{\tau};t)(t-x_0)^{k_0'}|\le C_4$.
Now
we need Nikolskii inequality
to give a lower estimate for the integral of $P_0$ near $\alpha$ and $\beta$.
Using that $\left\|P_0(\alpha,\mu;.)\right\|_{[\alpha-\rho_1,\alpha+\rho_1]}=P_0(\alpha)=1$ and $\deg(P_0)=2\mu$,
Nikolskii inequality (see e.g. \cite{MMR}, p. 498, Theorem 3.1.4.) 
yields that there exists $C_5>0$ independent of $\mu$ and $P_0$ such that 
\begin{equation*}
\int_{\alpha-\rho_1}^{\alpha+\rho_1}
P_0(\alpha,\mu;t) 
dt
= \int_{\alpha-\rho_1}^{\alpha+\rho_1} \left| P_0(\alpha,\mu;t) \right| dt 
\ge 
C_5  \frac{1}{\mu^2} 
\end{equation*}
with some $C_5>0$ depending on $\rho_1$ only
and we can easily obtain
\begin{equation}
\int_{\alpha-\rho_1}^{\alpha+\rho_1}
P_0(\alpha,\mu;t) 
Z_1(t) \left|R(\underline{\tau};t)
(t-x_0)^{k_0'}\right|
dt
\ge 
\frac{C_5}{C_3 C_4}
\frac{1}{\mu^2} 
\label{nikolapplied}
\end{equation}
as well. 
Moreover, for any $\lambda\in [0,1]$,
$\max_{[\alpha-\rho_1,\alpha+\rho_1]} P_1(.) \ge 1-\lambda$,
hence 
applying Nikolskii inequality (see e.g. \cite{MMR}, p. 498, Theorem 3.1.4.)  on these intervals,
\begin{equation*}
\int_{\alpha-\rho_1}^{\alpha+\rho_1}
P_1(\lambda,\mu;t) 
Z_1(t) \left|R(\underline{\tau};t)
(t-x_0)^{k_0'}\right|
dt
\ge 
\frac{C_5}{C_3 C_4}
\frac{1-\lambda}{\mu^2} 
\end{equation*}
and similarly for $[\beta-\rho_1,\beta+\rho_1]$.

We need an upper  estimate too.
If $t\in[a_0,a_{l+1}]$, $|t-\alpha|\ge \rho_1$,
then with $\rho_2:=1-\left(\frac{\rho_1}{c_2}\right)^2<1$
we can write 
\begin{equation*}
P_0(\alpha,\mu;t) 
\le \rho_2^\mu
\end{equation*}
and if 
$t\in H:= [a_0,\alpha-\rho_1]\cup[\alpha+\rho_1,\beta-\rho_1]\cup[\beta+\rho_1,a_{l+1}]$
then
\begin{multline}
P_0(\alpha,\mu;t) 
Z_1(t) 
\left| R(t) (t-x_0)^{k_0'}\right|
,
P_0(\beta,\mu;t) 
Z_1(t) 
\left| R(t) (t-x_0)^{k_0'}\right|
\le 
C_3 C_4 \rho_2^\mu
\label{faraway}
\end{multline}
and 
\begin{gather}
P_0(\alpha,\mu;t) 
Z_1(t) 
\left| R(t) (t-x_0)^{k_0'}\right|
\le 
C_3 C_4 \rho_2^\mu,
\ |t-\beta|\le\rho_1,
\label{farawayfrombeta}
\\
P_0(\beta,\mu;t) 
Z_1(t) 
\left| R(t) (t-x_0)^{k_0'}\right|
\le 
C_3 C_4 \rho_2^\mu,
\ |t-\alpha|\le\rho_1.
\label{farawayfromalpha}
\end{gather} 

Now we can investigate $f_{l_0}(.)$
on $A_{l_0}:=\{(\tau_1,\ldots,\tau_{l_0-1},\lambda, \tau_{l_0+1},\ldots,\tau_l)\in\cR:
\lambda=0\}$:
by \eqref{nikolapplied}
we can write
\begin{multline*}
\left|\int_{a_{l_0}}^{x_0}
P_0(\alpha,\mu;t)
Z_1(t) R(\underline{\tau};t)(t-x_0)^{k_0'}
dt
\right|
\\
\ge 
\int_{\alpha-\rho_0}^{\alpha+\rho_0}
P_0(\alpha,\mu;t)
Z_1(t) \left|R(\underline{\tau};t) (t-x_0)^{k_0'}\right| dt
\ge 
\frac{C_5}{C_3 C_4} 
\frac{1}{\mu^2} 
\end{multline*}
and by \eqref{faraway},
we can write
\begin{equation*}
\left|\int_{x_0}^{a_{l_0+1}}
P_0(\alpha,\mu;t)
Z_1(t) R(\underline{\tau};t) 
(t-x_0)^{k_0'}
dt
\right|
\le 
c_2 C_3 C_4 \rho_2^\mu.
\end{equation*}
These last two displayed estimates
show that $f_{l_0}(.)$ on $A_{l_0}$
has the same sign as $R(t)(t-x_0)^{k_0'}$ on $(a_{l_0},x_0)$
(that is,  
$(-1)^{l-l_0-1 +k_0'} 
=(-1)^{l-l_0}$) if $\mu$ is large ($\mu\ge \mu_1$).
Similarly, by replacing $\alpha$ with $\beta$,
we can say that
$f_{l_0}(.)$ on $B_{l_0}:=\{(\tau_1,\ldots,\tau_{l_0-1},\lambda, \tau_{l_0+1},\ldots,\tau_l)\in\cR:
\lambda=1\}$ 
has the same sign as $R(t)(t-x_0)^{k_0'}$ on $(x_0,a_{l_0+1})$
(that is, $(-1)^{l-l_0 +1}$), again if $\mu$ is large ($\mu\ge \mu_2$).
These two observations show that
on the opposite sides $A_{l_0}$
and $B_{l_0}$, $f_{l_0}(.)$ has different signs (since $k_0'$ is odd).
Obviously, all $f_j(.)$ functions are continuous.

Now the conditions of Poincaré-Miranda theorem are satisfied,
hence there exists $(\underline{\tau}_1,\lambda,\underline{\tau}_2)\in\cR$
such that  $f_j(\underline{\tau}_1,\lambda,\underline{\tau}_2)=0$ for all $j=1,\ldots,l$.
Fix these values and denote them
by the same letters in the rest of this proof.

Finally, in \eqref{Qrepr}, we choose
$C_1\in\bR$ so that $S(x_0)=1$, where actually we can write 
\begin{equation*}
\frac{1}{C_1}=
\int_{a_{l_0}}^{x_0}
P_1(\lambda,\mu;t) Z_1(t)
\,
R(\underline{\tau};t) 
(t-x_0)^{k_0'}
\,
dt
\end{equation*} 
and by knowing the sign of $R(\underline{\tau};.)$ over 
$(a_{l_0},x_0)$, $\sign C_1 = (-1)^{l-1-l_0 +k_0'}=(-1)^{l-l_0}$ and by \eqref{nikolapplied}, 
$|C_1|=O(\mu^2)$.

So $S$ is uniquely determined
and it has the following properties.
$S(a_j)=0$ for all $j=1,\ldots,l$,
hence by the key observation, \eqref{fdpz:zeros} holds.
By the normalization \eqref{fdpz:atxnull} is true.
\eqref{fdpz:datxnull} is also true,
because of \eqref{Qrepr}.
For simplicity, put 
\begin{equation*}
S_1(t):= C_1 Z_1(t) P_1(t) R(t) (t-x_0)^{k_0'}.
\end{equation*}
To see \eqref{fdpz:peaking},
\eqref{fdpz:high},
\eqref{fdpz:change},
and the first half of \eqref{fdpz:low}
(with $1$ in place of $\min(1,|Z(x)|)$)
first note that \eqref{faraway} 
implies that
\begin{equation}
\left|Z_1(t) P_1(\mu;t) R(t) (t-x_0)^{k_0'}\right|\le  C_3 C_4 \rho_2^\mu
\label{zoneponeest}
\end{equation}
when $t\in H=[a_0,\alpha-\rho_1]\cup[\alpha+\rho_1,\beta-\rho_1]\cup[\beta+\rho_1,a_{l+1}]$.
Moreover, let us remark that
\begin{equation}
|P_1(\mu;t)| \le \rho_2^\mu
\label{ponesupnormest}
\end{equation}
for $t\in H$.
Let us choose $\delta_1>0$ such that $0<\delta_1<-1/64 \log(\rho_2)$, 
hence for large $\mu$,
$\mu \ge \mu_3$, we have
\begin{equation*}
C_3 C_4 \rho_2^\mu 
\le 
\exp(-\delta_1 (64\mu)).
\end{equation*}
Now, if $\mu\ge\mu_4$ is large enough and 
using $|C_1|=O(\mu^2)$, 
we can write 
\begin{equation*}
\left|S_1(t)\right|\le 
C_1 C_3 C_4 \rho_2^\mu \le \exp(-\delta_1 (32\mu)), \quad t\in H.
\end{equation*}
Integrating this on 
$[a_1,x]$, 
$x\le\alpha-\rho_1$, we obtain for large $\mu$, $\mu\ge \mu_5$, that
\begin{equation*}
|S(x)|=
\left|\int_{a_1}^x S_1(t) dt\right| 
\le 
c_2 C_1 C_3 C_4 \rho_2^\mu 
\le \exp(-\delta_1(16\mu))
\end{equation*}
moreover this also holds when $x\in[a_0,a_1]$. 
If $x\in[\alpha+\rho_1,x_0]$, then
using that $S_1(t)\ge 0$ when $t\in[\alpha+\rho_1,x_0]$, we can write
\begin{multline*}
1-\exp(-\delta_1 (16\mu))
\le 
1-c_2 C_1 C_3 C_4 \rho_2^\mu
\le 
\int_{a_1}^{x_0} S_1(t) dt 
-
\int_{x}^{x_0} S_1(t)dt
\\=
S(x)
\le S(x_0)
=1.
\end{multline*}
Similarly when $x\in[x_0,\beta-\rho_1]$, $S_1(t)\le 0$ on 
$[x_0,\beta-\rho_1]$, hence
\begin{multline*}
1-\exp(-\delta_1 (16\mu))
\le 
1-c_2 C_1 C_3 C_4 \rho_2^\mu
\le 
\int_{a_1}^{x_0} S_1(t) dt 
+
\int_{x_0}^{x} S_1(t)dt
\\=
S(x)
\le S(x_0)
=1.
\end{multline*}
As for $[\beta+\rho_1,a_{l+1}]$,
we know that $|S_1(t)|\le C_1 C_3 C_4 \rho_2^\mu\le \exp(-\delta_1(32\mu))$, and $S(a_{l_0+1})=0$, 
so for $x\in[a_{l_0+1},a_{l+1}]$,
$S(x)=\int_{a_1}^x S_1(t) dt=\int_{a_{l_0+1}}^x S_1(t) dt$ 
and $|S(x)|\le c_2 C_1 C_3 C_4 \rho_2^\mu \le 
\exp(-\delta_1(16\mu))$.
For $x\in[\beta+\rho_1,a_{l_0+1}]$,
we know that
\begin{multline*}
S(x)=
\int_{a_1}^x S_1(t) dt= 
\int_{a_1}^{a_{l_0+1}} S_1(t)dt- 
\int_{x}^{a_{l_0+1}} S_1(t)dt
\\=
0+\int_x^{a_{l_0+1}} -S_1(t) dt=
\int_x^{a_{l_0+1}} \left|S_1(t)\right| dt
\le c_2 C_1 C_3 C_4 \rho_2^\mu
\le 
\exp(-\delta_1 (16\mu)).
\end{multline*}
These last four displayed estimates
show that \eqref{fdpz:high} and
first half of \eqref{fdpz:low} hold
since 
\begin{equation*}
\exp(-\delta_1 (16\mu))
\le \exp(-2\delta_1  (3\deg S))
\end{equation*}
if $\mu\ge \mu_6$ is large.
\eqref{fdpz:change} and \eqref{fdpz:peaking} are also true,
since $S'(.)=S_1(.)$ is nonnegative 
on $(a_{l_0},x_0)$ and is nonpositive on $(x_0,a_{l_0+1})$.

To establish the second half of \eqref{fdpz:low} (with $Z(x)$ in place of $\min(1,|Z(x)|)$),
we write (similarly to \eqref{zoneponeest})
\begin{multline*}
\left|S(x)\right| = \left| C_1
\int_{a_1}^x Z_1(t) \frac{P_1(t)}{\|P_1\|_H}
R(t) (t-x_0)^{k_0'} dt 
\right|
\|P_1\|_H
\\
\le |C_1| \int_{a_1}^x Z_1(t) \frac{|P_1(t)|}{\|P_1\|_H}
|R(t)| |t-x_0|^{k_0'} dt \|P_1\|_H
\\ 
\le 
|C_1| C_4 \int_{a_j}^x  Z_1(t) dt
\, \|P_1\|_H
\end{multline*}
where $x\in [a_j,a_{j+1}]$ and
$H= [a_0,\alpha-\rho_1]\cup[\alpha+\rho_1,\beta-\rho_1]\cup[\beta+\rho_1,a_{l+1}]$.
It is easy to see that 
\begin{equation*}
\frac{\int_{a_j}^x  Z_1(t) dt}{|Z(x)|}
\end{equation*}
has finite limit as $x\rightarrow a_j$
since $Z$ and $Z_1$ have zeros of order $k_j$
and $k_j'$ at $a$ respectively.
The same is true on the left hand side neighborhood of $a_j$.
Hence we see that
$\int_{a_1}^x Z_1(t) dt /|Z(x)|$ is bounded 
when $x\in H$, so,
using $\|P_1\|_H\le \exp(-\delta_1 32 \mu)$
coming from \eqref{ponesupnormest},
we obtain that the second half of \eqref{fdpz:low}
holds for large $\mu$, $\mu\ge \mu_7$.

To 
fulfill
\eqref{fdpz:nonneg},
consider $Q:=S^2$.
Then, the degree of $Q$ is  $2(k_1'+\ldots+k_l'+l-2+k_0'+2\mu +1)=4\mu+const$.
By squaring $S$ defined in 
\eqref{Qrepr},
it is easy to see that
\eqref{fdpz:atxnull}, \eqref{fdpz:datxnull}, \eqref{fdpz:peaking}, \eqref{fdpz:low}, \eqref{fdpz:zeros}
and \eqref{fdpz:change} are preserved,
and actually, 
\eqref{fdpz:high} too: 
\begin{equation*}
\left(1-\exp(-2\delta_1  (3\deg S))\right)^2
\ge
1-\exp(-2\delta_1  \deg Q)
\end{equation*}
since $2\exp(-2\delta_1  3 \deg S)-
\exp(-4\delta_1 3 \deg S)\le 
\exp(-2\delta_1 \deg Q)$ 
if $\deg S$ is large  (that is, if $\mu\ge\mu_8$).

Finally, we have a sequence of polynomials for particular degrees.
The basic idea to use the same polynomial for larger degree works now,
because of the following.
Put $m_1(m):= \max\{m_1: m_1=4\mu+ 2(k_1'+\ldots+k_l'+l-2+k_0'+1), m_1\le m, \mu\in\bN\}$.
For general $m\in\bN$,
replacing the error term for $m$ from $m_1(m)$ 
brings in a factor $\exp(-2\delta_1 m)/\exp(-2\delta m_1(m))$
which can be estimated as
\begin{equation*}
\limsup_{m\rightarrow\infty} \exp(-2\delta_1 m)/\exp(-2\delta_1 m_1(m))=\exp(-2\delta_1 const)<1,
\end{equation*}
where $const$ is actually $2(k_1'+\ldots+k_l'+l-2+k_0'+1)$.
Hence, if $\mu\ge \mu_9$ is large, then
\begin{equation*}
\exp(-2\delta_1 m_1(\deg Q))
\le 
\exp(-\delta_1 \deg Q)
\end{equation*}
which finishes the proof.
\end{proof}

Remark:
Note that (the second half of) \eqref{fdpz:low} implies \eqref{fdpz:zeros}.

We need the following 
trigonometric form 
of fast decreasing polynomials.
In the proof we use so-called half-integer 
trigonometric polynomials $\sum_{j=0}^{n} a_j \cos((j+1/2)t) + b_j \sin((j+1/2)t)$.
They are natural in this context,
see, e.g.~ the product representation \cite{BorweinErdelyi}, p.~10, 
or Videnskii's original paper 
\cite{VidenskiiHalfInt},
or the paper \cite{NT2013}.

\begin{theorem}
\label{thm:fdtp}
Let $t_0,\alpha,\beta,\alpha',\beta'\in(-\pi,\pi)$  
be such that
$-\pi<\alpha'<\alpha<t_0<\beta<\beta'<\pi$ and
$\alpha_1,\ldots,\alpha_l\in(-\pi,\pi)\setminus [\alpha',\beta']$
be with the corresponding positive integer powers
$k_1,\ldots,k_l$.
Put $Z(t):=\prod_{j=1}^l\left|\sin \frac{t-\alpha_j}{2}\right|^{k_j}$.

Then there exists $\delta_1>0$ such that for all large $m$ there exists
a trigonometric polynomial $Q_m$ with degree at most $m$ such that
\begin{gather}
Q_m(t_0)=1,
\label{fdtp:atxnull}
\\
0\le Q_m(t)<1 \mbox{ for } t\in[-\pi,\pi), t\ne t_0,
\label{fdtp:nonneg}
\\
Q_m^{(k)}(\alpha_j)=0, \quad
j=1,\ldots,l, \  k=0,1,\ldots,k_j,
\label{fdtp:zeros}
\\
Q_m(t)\le 
\min(1,|Z(t)|)
\exp(-\delta_1 m)
\mbox{ for } t\in[-\pi,\pi]\setminus (\alpha',\beta'),
\label{fdtp:low}
\\
|Q_m(t)-1|\le \exp(-\delta_1 m)
\mbox{ for } t\in [\alpha,\beta],
\label{fdtp:high}
\\
Q_m(t) \mbox{ is strictly monotone on } [\alpha',\alpha] \mbox{ and on }
[\beta,\beta'].
\label{fdtp:change}
\end{gather}
\end{theorem}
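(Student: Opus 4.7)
My plan is to mimic the construction in the proof of Theorem~\ref{prop:fdpz}, replacing every algebraic building block by a trigonometric analog. Relabel the $\alpha_j$'s in increasing order on $(-\pi,\pi)$ and let $l_0$ be the index with $t_0\in(\alpha_{l_0},\alpha_{l_0+1})$. I would set $Z_1(t):=\prod_{j=1}^{l}\sin^{k_j'}((t-\alpha_j)/2)$ with $k_j'\in\{k_j,k_j+1\}$ chosen even so that $Z_1\ge 0$; take the trigonometric bump $P_0(\delta,\mu;t):=\cos^{2\mu}((t-\delta)/2)$ of degree $\mu$ at $t=\delta$ in place of the algebraic bump, and put $P_1(\alpha,\beta,\lambda,\mu;t):=(1-\lambda)P_0(\alpha,\mu;t)+\lambda P_0(\beta,\mu;t)$; let the free-parameter factor be $R(\underline{\tau};t):=\prod_{j\neq l_0,\,1\le j\le l-1}\sin((t-\tau_j)/2)$ with $\tau_j\in[\alpha_j,\alpha_{j+1}]$; and set $L(t):=\sin^{k_0'}((t-t_0)/2)$ with $k_0'$ odd. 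Define $S_1(t):=C_1\,Z_1(t)\,P_1(t)\,R(\underline{\tau};t)\,L(t)$, which, after arranging the total count of $\sin((\cdot)/2)$-factors to have the correct parity via the choices of $k_0',k_j'$, is a half-integer trigonometric polynomial.

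Next, let $S(t)$ be the half-integer antiderivative of $S_1$ normalised by $S(\alpha_1)=0$; this is again half-integer because $\int\cos((k+1/2)t)\,dt$ and $\int\sin((k+1/2)t)\,dt$ both preserve the half-integer class. Apply Poincar\'e--Miranda on the $l$-dimensional box with coordinates $(\underline{\tau}_1,\lambda,\underline{\tau}_2)$ to the functions $f_j(\underline{\tau}_1,\lambda,\underline{\tau}_2):=\int_{\alpha_j}^{\alpha_{j+1}}S_1(t)\,dt$, $j=1,\ldots,l$. On the face $\tau_j=\alpha_j$ (for $j\neq l_0$) the factor $\sin((t-\tau_j)/2)$ has a definite sign on $(\alpha_j,\alpha_{j+1})$ and that sign flips on the opposite face $\tau_j=\alpha_{j+1}$; for $j=l_0$ one uses the trigonometric Nikolskii inequality $\int_{\delta-\rho_1}^{\delta+\rho_1}\cos^{2\mu}((t-\delta)/2)\,dt\ge C/\mu$ together with the pointwise bound $\cos^{2\mu}((t-\delta)/2)\le\rho_2^\mu$ (some $\rho_2<1$) for $|t-\delta|\ge\rho_1$, to see that the near-$\delta$ contribution dominates and has opposite signs for the two values $\lambda=0$ and $\lambda=1$. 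This yields parameter values with $S(\alpha_j)=0$ for every $j=1,\ldots,l$.

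Choose $C_1$ so that $S(t_0)=1$ and set $Q_m:=S^2$, a nonnegative integer-frequency trigonometric polynomial of degree $4\mu+\mathrm{const}$ (so for each large $m$ pick $\mu$ with $4\mu+\mathrm{const}\le m$). The "key observation" of the algebraic proof transfers: since $S'=S_1$ vanishes to order $k_j'$ at $\alpha_j$ thanks to $Z_1$, once $S(\alpha_j)=0$ is imposed, $S$ vanishes there to order $\ge k_j'+1\ge k_j+1$, which yields~\eqref{fdtp:zeros}. The bound $|S_1(t)|\le C\rho_2^\mu$ outside $[\alpha-\rho_1,\alpha+\rho_1]\cup[\beta-\rho_1,\beta+\rho_1]$ gives $|S(t)-1|\le\exp(-\delta_1 m)$ on $[\alpha,\beta]$ and $|S(t)|\le\sqrt{\min(1,|Z(t)|)}\,\exp(-\delta_1 m)$ on $[-\pi,\pi]\setminus(\alpha',\beta')$ (the $\sqrt{|Z|}$ factor coming from $|Z_1|\le c|Z|$ and the fact that $\int_{\alpha_j}^{t}Z_1/|Z(t)|$ remains bounded near each $\alpha_j$); squaring produces~\eqref{fdtp:high} and~\eqref{fdtp:low}, while~\eqref{fdtp:atxnull} is the normalisation, and~\eqref{fdtp:nonneg} and~\eqref{fdtp:change} follow from $Q_m\ge 0$ combined with the definite sign of $S_1$ on $(\alpha_{l_0},t_0)$ and $(t_0,\alpha_{l_0+1})$.

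The main obstacle is the parity bookkeeping that keeps $S$ in the half-integer class throughout the construction. This is important because purely half-integer polynomials satisfy $S(\pi)=-S(-\pi)$ automatically, so $Q_m=S^2$ is $2\pi$-periodic without any further constraint on the parameters, and the antiderivative of $S_1$ contains no spurious constant term that would spoil antiperiodicity. The circular topology is otherwise harmless since $[-\pi,\pi]\setminus[\alpha',\beta']$ is a single arc containing all the $\alpha_j$'s. Compared with the algebraic case the only (minor) difference is that the trigonometric Nikolskii inequality provides a $C/\mu$ rather than $C/\mu^2$ lower bound, which does not alter the final form of $\delta_1$.
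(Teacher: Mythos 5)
There is a genuine gap, and it sits exactly at the point you dismiss as automatic: the claim that the antiderivative of the half-integer polynomial $S_1$ ``contains no spurious constant term'' and that therefore $Q_m=S^2$ is $2\pi$-periodic with no further constraint on the parameters. Write $H$ for the antiderivative of $S_1$ taken inside the half-integer class (this exists, since $\int\cos((j+\tfrac12)t)\,dt$ and $\int\sin((j+\tfrac12)t)\,dt$ stay half-integer). Your $S$, normalised by $S(\alpha_1)=0$, is $S(t)=H(t)-H(\alpha_1)$, i.e.\ a half-integer polynomial \emph{plus a constant} $c=-H(\alpha_1)$. Since $H(t+2\pi)=-H(t)$, one has $S(t+2\pi)=-S(t)+2c$, so $S^2(t+2\pi)-S^2(t)=-4cH(t)$, which is not identically zero unless $c=0$; concretely $S^2(\pi)\neq S^2(-\pi)$ in general, so $Q_m:=S^2$ is not $2\pi$-periodic and hence not a trigonometric polynomial at all. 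Note also that half-integer polynomials do \emph{not} have zero mean over a period ($\int_0^{2\pi}\sin((j+\tfrac12)t)\,dt=2/(j+\tfrac12)$), and $c=\tfrac12\int_{\alpha_1}^{\alpha_1+2\pi}S_1$; your Poincar\'e--Miranda system only forces $\int_{\alpha_j}^{\alpha_{j+1}}S_1=0$ for the gaps between consecutive zeros inside $(-\pi,\pi)$, which leaves the contribution of the arc through $\pm\pi$ uncontrolled, so $c$ is generically nonzero (it is exponentially small, but ``small'' does not help: the interpolation conditions \eqref{fdtp:zeros} and periodicity are exact, so you cannot just perturb the defect away).

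To repair this you must impose the additional scalar condition $\int_{\alpha_1}^{\alpha_1+2\pi}S_1=0$, and that costs one more equation than you have free parameters: by skipping $\tau_{l_0}$ you have only $l-2$ taus plus $\lambda$, i.e.\ $l-1$ parameters, while the zero conditions at $\alpha_1,\dots,\alpha_l$ together with the zero-mean condition amount to $l$ equations (and for the extra, wrap-around equation you would also have to exhibit a sign change in some coordinate, which your integrand, being uniformly exponentially small on that arc, does not provide). This is precisely where the paper's proof of Theorem~\ref{thm:fdtp} deviates from the algebraic Theorem~\ref{prop:fdpz}: it keeps \emph{all} of $\tau_1,\dots,\tau_{l-1}$ plus $\lambda$ as parameters (no $\tau$ is dropped), converts $\tilde S_1$ into an honest integer-order trigonometric polynomial $\tilde S_2$ by multiplying with $\cos\frac{t-(\alpha^*-\pi)}{2}$ when $l$ is odd, and runs Poincar\'e--Miranda on the $l$ functions $f_0,\dots,f_{l-1}$ attached to intervals $I_0,\dots,I_{l-1}$ that cover a full period, so that summing the equations yields exactly the vanishing of the integral over one period and hence the antiderivative is a genuine trigonometric polynomial. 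Your relabelling also tacitly assumes prescribed zeros on both sides of $[\alpha',\beta']$ (otherwise $t_0\in(\alpha_{l_0},\alpha_{l_0+1})$ has no meaning), which the paper's circular unwrapping with $I_0$ avoids; the Nikolskii exponent $\mu$ versus $\mu^2$ is indeed harmless. As it stands, though, the construction does not produce a trigonometric polynomial, so the proof is not complete.
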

\begin{proof}
Briefly, we use similar idea as in the previous proof (Theorem \ref{prop:fdpz}),
but there are lots of differences.

First, we introduce 
the intervals between the neighboring $\alpha_j$'s as follows
using the ordering of $\alpha_j+\epsilon_j 2\pi$, $j=1,\ldots,l$,
and $\epsilon_j=0$ if $\alpha_j>\beta'$ and $\epsilon_j=1$ otherwise.
Let $I_j$'s, $j=1,\ldots,l-1$ denote the closed intervals  such that endpoints are the $\alpha_j+\epsilon_j 2\pi$'s 
and they are disjoint except for the endpoints, and they are ordered from left to right
(that is, if $t_1\in I_j$ and $t_2\in I_k$ and $j\le k$, then
$t_1\le t_2$).
Denote the left endpoint of $I_1$ by $\alpha_*$,
and the right endpoint of $I_{l-1}$ by $\alpha^*$,
that is, $\alpha_*$ and $\alpha^*$ are the minimum  and maximum of $\alpha_j+\epsilon_j 2\pi$'s respectively.
Put $I_0:=[\alpha^*-2\pi,\alpha_*]$, this way $I_0,I_1,\ldots,I_{l-1}$ cover an interval of length $2\pi$ and $t_0\in I_0$, $[\alpha',\beta']\subset I_0$.
Note that $I_j$'s are not necessarily subsets of $(-\pi,\pi)$.

\begin{figure}
\begin{center}
\includegraphics[keepaspectratio,width=\textwidth]{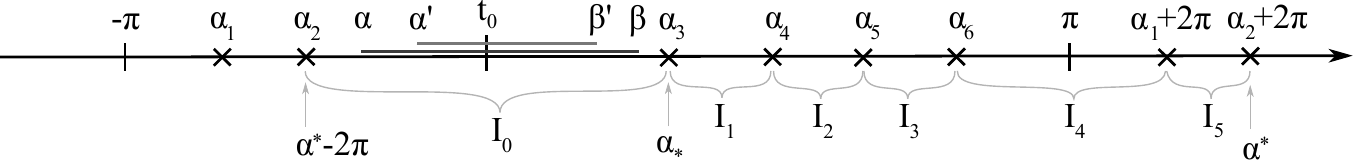}
\end{center}
\caption{Prescribed zeros and intervals in the trigonometric case}
\end{figure}

We define
\begin{gather*}
\tilde{Z}_1(t):=\prod_{j=1}^{l} \left(\sin \frac{t-\alpha_j}{2}\right)^{k_j'},
\quad
\tilde{R}(\underline{\tau}; t)=\tilde{R}(t):= \prod_{j=1}^{l-1}\sin\frac{t-\tau_j}{2},
\\
\tilde{P}_0(t)=\tilde{P}_0(a,\mu;t):=
\left(\cos\frac{t-a}{2}\right)^{2\mu},
\\
\tilde{P}_1(a,b,\lambda,\mu;t)=
(1-\lambda)
\tilde{P}_0(a,\mu;t)
+
\lambda
\tilde{P}_0(b,\mu;t)
\end{gather*}
where  
$k_j'=k_j$ if $k_j$ is even and
$k_j'=k_j+1$ if $k_j$ is odd,
for $j=1,\ldots,l$,
and $\tau_j\in I_j$, $j=1,\ldots,l-1$, 
and $\alpha'<a<\alpha<\beta<b<\beta'$, 
$a:=(\alpha+\alpha')/2$, $b:=(\beta+\beta')/2$,  
and 
$\lambda\in[0,1]$.
We also put $k_0'=k_0$ if $k_0$ is odd
and $k_0'=k_0+1$ if $k_0$ is even; and 
$\underline{\tau}:=(\tau_1,\ldots,\tau_{l-1})$.
As above, if some of the parameters are fixed or unimportant in the current consideration, then we leave them out, e.g.
$\tilde{P}_0(t)=\tilde{P}_0(a,\mu;t)$ and
$\tilde{P}_1(t)=\tilde{P}_1(\mu;t)=\tilde{P}_1(\lambda,\mu;t)=\tilde{P}_1(a,b,\lambda,\mu;t)$.

Some immediate properties are the following: 
$\tilde{Z}(t)$, $\tilde{P}_0(t)$ and $\tilde{P}_1(t)$ are nonnegative trigonometric polynomials.
If $l$ is even, then $\tilde{R}(t)$ is a half-integer trigonometric polynomial, 
if $l$ is odd, then it is a trigonometric polynomial (with degree $(l-1)/2$).

Consider
\begin{equation*}
\tilde{S}_1(t)
:=
\tilde{Z}_1(t)\, \tilde{P}_1(\mu,\lambda; t)\, \tilde{R}(\underline{\tau};t) \left(\sin\frac{t-t_0}{2}\right)^{k_0'}
\end{equation*}
which is a trigonometric polynomial if $l$ is 
even
and is a half-integer trigonometric polynomial if $l$ is 
odd. 
We need 
\begin{equation*}
\tilde{S}_2(t):=
\begin{cases}
 \tilde{S}_1(t) , &
\mbox{ if } l \mbox{ is even,}
\\
 \tilde{S}_1(t) \cos\frac{t-(\alpha^*-\pi)}{2} , 
&
\mbox{ if } l \mbox{ is odd}
\end{cases}
\end{equation*}
which is a trigonometric polynomial in both cases.

Now we would like to integrate $\tilde{S}_1(.)$ and get a trigonometric polynomial too. 
To do this, we use Poincaré-Miranda theorem,
as in the proof of Theorem \ref{prop:fdpz}.
Consider the rectangle $\mathcal{R}:=[0,1]\times I_1\times I_2\times\ldots\times I_{l-1}$ and
$(\lambda,\underline{\tau})=(\lambda,\tau_1,\ldots,\tau_{l-1})\in\mathcal{R}$.
We use the functions 
\begin{equation*}
f_j(\lambda,\underline{\tau})
:= 
\int_{I_j} \tilde{S}_2(\lambda,\underline{\tau},\mu;t) dt,
\  j=0,1,\ldots,l-1.
\end{equation*}
Note that $\sin\frac{t-t_0}{2}$ is negative on $(\alpha^*-2\pi,t_0)$ and is positive on $(t_0,\alpha^*)$, 
$\cos\frac{t-(\alpha^*-\pi)}{2}$ is positive on 
$(\alpha^*-2\pi,\alpha^*)$ but 
it introduces an extra zero at $\alpha^*$.
It can be verified same way as in the proof of Theorem \ref{prop:fdpz} that there are sign changes in
$f_0$ as $\lambda$ changes from $0$ to $1$,
and in $f_j$ as $\tau_j$ goes from the left endpoint of $I_j$ to the right endpoint of $I_j$.

Poincaré-Miranda theorem shows that there are
particular $\lambda\in[0,1]$, $\tau_1\in I_1,\ldots,\tau_{l-1}\in I_{l-1}$ such that all the $f_j$'s are zero; fix this solution and denote it by $\lambda,\tau_1,\ldots,\tau_{l-1}$ in the rest of this proof.
Summing up these integrals for all $j=0,1,\ldots,l-1$,
we also obtain that $\int_{\alpha^*-2\pi}^{\alpha^*} \tilde{S}_2(t) dt=0$.

Put
\begin{equation*}
\tilde{S}(t):=
\int_{\alpha_*}^t C_1 \tilde{S}_2(\tau) d\tau
\end{equation*}
where $C_1$ will be chosen later 
(like in the proof of Theorem \ref{prop:fdpz}).
In both cases ($l$ is even or odd), the integrand is a real trigonometric polynomial.
Since the integral of $\tilde{S}_2(t)$  over $[\alpha^*-2\pi,\alpha^*]$ is $0$,
$\tilde{S}(t)$ is also a trigonometric polynomial.
$C_1$ can be chosen so that
\begin{equation*}
\int_{\alpha_*}^{t_0} C_1 \tilde{S}_2(t) dt=1
\end{equation*}
holds.
The properties \eqref{fdtp:atxnull},
\eqref{fdtp:zeros}, \eqref{fdtp:low},
\eqref{fdtp:high}
and \eqref{fdtp:change}
can be verified same way as in the 
proof of Theorem \ref{prop:fdpz}.
A key tool was the Nikolskii inequality for algebraic polynomials and it should be replaced with the similar inequality for trigonometric polynomials, which is again due to Nikolskii (see, e.g~\cite{MMR}, p.~495, Theorem 3.1.1). 
Again, squaring $\tilde{S}$, we can construct the trigonometric polynomial which also satisfies 
\eqref{fdtp:nonneg}.

\end{proof}

\subsection{Rough Markov- and Bernstein-type inequalities}

The following two propositions have rather simple proofs,  they may be known, but  we could not find reference for them.

\begin{proposition}
Let $I\subset (-\pi,\pi)$ be a closed set consisting  of finitely many disjoint intervals such that 
none of them is a singleton  and $k$ be a positive integer.
Then there exists $C=C(I,k)>0$ such that
for all trigonometric polynomial $T_n$ with degree $n$,
we have
\begin{equation}
\label{roughMarkov}
\left\|T_n^{(k)}\right\|_I
\le 
C n^{2k}
\left\|T_n\right\|_I.
\end{equation}
\end{proposition}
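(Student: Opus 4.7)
The plan is to localize to a single component and then iterate the first-order trigonometric Markov inequality.

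First I would write $I=\bigcup_{j=1}^{m}[\alpha_j,\beta_j]$ as a disjoint union of finitely many non-degenerate closed intervals with $[\alpha_j,\beta_j]\subset (-\pi,\pi)$. Since
\[
\bigl\|T_n^{(k)}\bigr\|_I=\max_{1\le j\le m}\bigl\|T_n^{(k)}\bigr\|_{[\alpha_j,\beta_j]}\quad\text{and}\quad \|T_n\|_{[\alpha_j,\beta_j]}\le\|T_n\|_I,
\]
it suffices to prove the estimate separately on each $[\alpha_j,\beta_j]$; the final constant is then the maximum of the component-wise constants.

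Next, on a single component $[\alpha,\beta]\subset(-\pi,\pi)$, the translation $t\mapsto t-\frac{\alpha+\beta}{2}$ preserves the class of trigonometric polynomials of degree at most $n$, so I may assume the interval is symmetric, $[-\omega,\omega]$, with some $\omega\in(0,\pi)$. For $k=1$, Videnskii's inequality (the trigonometric Markov-type inequality on an interval shorter than the period, as referenced in \cite{BorweinErdelyi}, p.~243) yields a constant $C(\omega)>0$ such that
\[
\|T_n'\|_{[-\omega,\omega]}\le C(\omega)\, n^{2}\,\|T_n\|_{[-\omega,\omega]}.
\]
Since $T_n^{(j)}$ is itself a trigonometric polynomial of degree at most $n$ for every $j\le k$, I may apply this inequality $k$ times in succession, obtaining
\[
\bigl\|T_n^{(k)}\bigr\|_{[-\omega,\omega]}\le C(\omega)^{k}\, n^{2k}\,\|T_n\|_{[-\omega,\omega]}.
\]

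Combining over the $m$ components, the inequality \eqref{roughMarkov} follows with $C=\max_{j}C(\omega_j)^{k}$, where $\omega_j=(\beta_j-\alpha_j)/2$. There is no real obstacle here beyond invoking the stated Videnskii-type inequality; the only point to check is that the condition $I\subset(-\pi,\pi)$ (strict inclusion) keeps every $\omega_j$ bounded away from $\pi$, so that $C(\omega_j)$ stays finite. The bound is certainly not sharp in the constant—indeed the whole point of this section is to later improve the $n^{2k}$ factor at interior points to $n^k$—but that is not asked for in this rough statement.
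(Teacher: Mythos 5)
Your proposal is correct and is essentially the paper's own argument: the paper proves this proposition precisely by iterating Videnskii's inequality $k$ times on each component (maximal subinterval) of $I$, which is what you do after the harmless reduction to a single symmetric interval. The component-wise reduction and the observation that each derivative is again a trigonometric polynomial of degree at most $n$ are exactly the points the paper leaves implicit.
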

This immediately follows from iterating Videnskii's inequality on each component (maximal subinterval) of $I$. For Videnskii's inequality, see
\cite{BorweinErdelyi}, p.~243 (Exercise E.19 part c]) or \cite{Videnskii}.

We also need a rough Bernstein-type inequality for higher derivatives of
trigonometric polynomials.

\begin{proposition}
Let $I\subset (-\pi,\pi)$ be again a closed set consisting  of finitely many disjoint intervals such that 
none of them is a singleton  and $k$ be a positive integer.
Fix a closed set $I_0\subset \interior I$
(subset of the one dimensional interior of $I$).
Then
there exists $C=C(I,I_0,k)>0$ such that
for all trigonometric polynomial $T_n$ with degree $n$,
we have for $t\in I_0$
\begin{equation}
\label{roughBernstein}
\left|T_n^{(k)}(t)\right|
\le 
C n^{k}
\left\|T_n\right\|_I.
\end{equation}
\end{proposition}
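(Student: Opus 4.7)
The plan is to reduce to a first-order pointwise Videnskii--Bernstein inequality at points bounded away from the endpoints of a subinterval, and then iterate it $k$ times through a telescoping sequence of buffer sets.

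Since $I_0$ is compact in the one-dimensional interior of $I$ and both sets are finite unions of closed intervals, there exists $\delta>0$ with $\mathrm{dist}(I_0,\bR\setminus \interior I)\ge \delta$. Set $\eta:=\delta/(2k)$ and define the nested family
\[
I_0=J_0\subset J_1\subset\ldots\subset J_k\subset I,\qquad J_j:=\{t\in I : \mathrm{dist}(t,\bR\setminus\interior I)\ge (k-j)\eta\}.
\]
Each $J_j$ is a finite union of closed subintervals of $I$, and every component of $J_j$ is contained in some component of $J_{j+1}$ in such a way that the inclusion leaves a gap of at least $\eta$ on both sides.

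The key input is Videnskii's pointwise inequality (see \cite{BorweinErdelyi}, p.~243): for any trigonometric polynomial $P$ of degree at most $n$ on a subinterval $[\alpha,\beta]\subsetneq[-\pi,\pi]$, translating to the symmetric interval $[-\omega,\omega]$ with $\omega=(\beta-\alpha)/2$ gives
\[
|P'(s)|\le \frac{n\,\left|\cos\frac{s-(\alpha+\beta)/2}{2}\right|}{\sqrt{\sin^2\frac{\beta-\alpha}{4}-\sin^2\frac{s-(\alpha+\beta)/2}{2}}}\,\|P\|_{[\alpha,\beta]},\qquad s\in(\alpha,\beta).
\]
When $s$ stays at distance at least $\eta$ from both endpoints, the coefficient is bounded by some $C_\eta$ depending only on $\eta$ and an upper bound for $\beta-\alpha$. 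Applied componentwise, for any $t\in J_j$ and any trigonometric polynomial $P$ of degree at most $n$ this yields
\[
|P'(t)|\le C_\eta n\,\|P\|_{J_{j+1}}.
\]

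Applying this estimate successively with $P=T_n^{(k-1)},T_n^{(k-2)},\ldots,T_n^{(0)}$ (each itself a trigonometric polynomial of degree at most $n$) gives a telescoping chain
\[
\|T_n^{(k)}\|_{J_0}\le C_\eta n\,\|T_n^{(k-1)}\|_{J_1}\le C_\eta^2 n^2\,\|T_n^{(k-2)}\|_{J_2}\le\ldots\le C_\eta^k n^k\,\|T_n\|_{J_k}\le C_\eta^k n^k\,\|T_n\|_I,
\]
which is \eqref{roughBernstein} with $C:=C_\eta^k$. The only point requiring care is that Videnskii's estimate really does produce an $O(n)$ (rather than $O(n^2)$) factor once $s$ is uniformly bounded away from the endpoints of the containing component; given the classical pointwise form the argument is entirely routine, and the nested sequence of $k+1$ buffer sets is exactly what guarantees a fresh gap for each of the $k$ differentiation steps.
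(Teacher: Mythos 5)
Your argument is correct and is essentially the paper's own proof: the paper likewise deduces \eqref{roughBernstein} by iterating Videnskii's pointwise Bernstein-type inequality (\cite{BorweinErdelyi}, p.~243, E.19 part b]) on the component of $I$ containing the point and then bounding the sup norm there by $\|T_n\|_I$. Your nested buffer sets $J_0\subset J_1\subset\ldots\subset J_k$ simply make explicit the bookkeeping that keeps each differentiation step uniformly away from the component endpoints, which the paper leaves implicit.
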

This again, immediately follows from applying Videnskii's inequality
(see \cite{BorweinErdelyi}, p.~243, E.19 part b]) iteratively
on the component (say $I_0^+$) of $I$ containing $I_0$
and finally using $\|T_n\|_{I_0^+}\le 
\|T_n\|_{I}$.

\subsection{Asymptotically sharp Markov-type inequality}

\begin{theorem}
\label{highertrigMarkov}
Let $E\subset (-\pi,\pi)$ be a compact set satisfying \eqref{intervalcondition}.
Then for any trigonometric polynomial $T_n$ with degree $n$, we have
\begin{equation}\label{eq:Markov}
\left\|T_n^{(k)}\right\|_{[a-\rho,a]}
\le 
(1+o(1)) n^{2k} \Omega(E_\bT,e^{i a})^{2k} \frac{8^k \pi^{2k}}{(2k-1)!!} 
\left\|T_n\right\|_{E}
\end{equation}
where $o(1)$ is an error term that tends to $0$ as $n\rightarrow\infty$, 
depends on $E$ and $a$, but
it is independent of $T_n$.
This inequality is sharp, 
that is, there is a sequence of trigonometric 
polynomials $T_n$, 
$n=1,2,\dots$, such that $\deg T_n=n$ and
\begin{equation}\label{eq:Markovacc}
\left|T_n^{(k)}(a)\right|
\ge 
(1-o(1))n^{2k} \Omega(E_\bT,e^{i a})^{2k} \frac{8^k \pi^{2k}}{(2k-1)!!} 
\left\|T_n\right\|_{E},
\end{equation}
where $o(1)\rightarrow 0$ is an error term depending on $E$ and $n$.
\end{theorem}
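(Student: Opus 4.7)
I would follow the strategy of Totik--Zhou \cite{TZ}, with the fast decreasing trigonometric polynomials of Theorem~\ref{thm:fdtp} playing the role of the Remez-type estimates used there. The proof proceeds in three steps: (1) T-sets, (2) finite unions of intervals, (3) general compact $E$ satisfying \eqref{intervalcondition}.

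For a T-set $E=\{t:|U_N(t)|\le 1\}$ with $a$ a right endpoint of one of the constituent intervals, $U_N(a)=\pm 1$ and $U_N'(a)\ne 0$, so $U_N$ is a local diffeomorphism near $a$. The polynomial inverse image technique lifts the problem to the classical sharp higher-order Markov inequality
\begin{equation*}
\|p_m^{(k)}\|_{[-1,1]}\le \frac{m^2(m^2-1)\cdots(m^2-(k-1)^2)}{(2k-1)!!}\,\|p_m\|_{[-1,1]}
\end{equation*}
(saturated by Chebyshev) at effective degree $\sim nN$; converting back via Faà di Bruno's formula \eqref{faadibruno} produces a leading factor $|U_N'(a)|^k$, and the known relation between $|U_N'(a)|/N$ and the equilibrium density of $E_\bT$ at $e^{ia}$ yields the constant $8^k\pi^{2k}\Omega(E_\bT,e^{ia})^{2k}/(2k-1)!!$. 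The Faà di Bruno expansion also produces lower-order terms involving $T_n^{(j)}$ at the interior critical points of $U_N$ (where $U_N'=0$); to eliminate them, I would replace $T_n$ by $\tilde T_n:=T_n\cdot Q_m$, where $Q_m$ is the fast decreasing trigonometric polynomial of Theorem~\ref{thm:fdtp} with $t_0=a$, $[\alpha,\beta]$ a small neighborhood of $a$, and prescribed zeros of sufficiently high multiplicity at the interior critical points of $U_N$. By \eqref{fdtp:high}, \eqref{fdtp:zeros} and the rough Bernstein bound \eqref{roughBernstein}, $\tilde T_n^{(k)}(a)=(1+o(1))\,T_n^{(k)}(a)$, $\|\tilde T_n\|_E\le \|T_n\|_E$, and the cross terms in Faà di Bruno vanish to the required order, while $\deg\tilde T_n=n+O(1)$.

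For $E$ a union of finitely many intervals, the density result preceding Lemma~\ref{l:Omegalimit} gives, for every $\varepsilon>0$, a T-set $E'\subset E$ with $a\in E'$ and $\Omega(E'_\bT,e^{ia})^2\le (1+\varepsilon)\,\Omega(E_\bT,e^{ia})^2$; combining $\|T_n\|_{E'}\le\|T_n\|_E$ with the T-set case and letting $\varepsilon\to 0$ yields \eqref{eq:Markov} on $E$. For general compact $E$ satisfying \eqref{intervalcondition}, approximate from outside by $E_m^+$ (a union of intervals containing $E$), apply the previous case on $E_m^+$, and use Lemma~\ref{l:Omegalimit} to get $\Omega((E_m^+)_\bT,e^{ia})\to \Omega(E_\bT,e^{ia})$. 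The norm $\|T_n\|_{E_m^+}$ is replaced by $\|T_n\|_E$ through a Bernstein--Walsh estimate,
\begin{equation*}
\|T_n\|_{E_m^+}\le \exp\!\bigl(n\sup_{E_m^+} g_{E_\bT}\bigr)\|T_n\|_E,
\end{equation*}
where $g_{E_\bT}$ is the Green's function of the complement of $E_\bT$; since $E_m^+\setminus E$ is a union of vanishing intervals near points of $E$, the supremum tends to $0$, and choosing $m=m(n)$ to grow sufficiently quickly with $n$ gives the factor $1+o(1)$. Sharpness \eqref{eq:Markovacc} is established by a Chebyshev-type polynomial for a T-set $E^{(N)}\subset E$ chosen (via the density result) so that $\Omega(E^{(N)}_\bT,e^{ia})$ is close to $\Omega(E_\bT,e^{ia})$; this polynomial saturates the classical Markov bound on $[-1,1]$ and hence, after Faà di Bruno conversion, saturates \eqref{eq:Markov}.

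The principal obstacle is the T-set step: carrying out the Faà di Bruno conversion precisely enough to isolate the leading term, and verifying that the fast decreasing polynomial $Q_m$ constructed in Theorem~\ref{thm:fdtp} kills all the lower-order cross terms without degrading the sharp constant $8^k\pi^{2k}/(2k-1)!!$. The delicate point is that the prescribed-zero multiplicities must be large enough to dominate the Faà di Bruno contributions near each interior critical point of $U_N$, while the exponentially small error in \eqref{fdtp:high} must still be negligible compared with the $n^{2k}$ growth of the main term; this is precisely what makes Theorem~\ref{thm:fdtp} the clean replacement for the Remez inequality used in \cite{TZ}.
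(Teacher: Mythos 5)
Your overall architecture (T-sets, then finite unions of intervals via the inside T-set density result, then general compact sets, with fast decreasing polynomials handling the interior extremal points, and Chebyshev-of-$U_N$ for sharpness) is the same as the paper's, but the central reduction in the T-set step is missing. For an arbitrary trigonometric polynomial $T_n$ on a T-set $E=\{|U_N|\le 1\}$, multiplying by the fast decreasing polynomial of Theorem~\ref{thm:fdtp} does \emph{not} produce a polynomial of $U_N$, so there is nothing to which V.~Markov's inequality on $[-1,1]$ can be applied, and the ``cross terms at the interior critical points'' you propose to kill are not even defined. The mechanism the paper uses is symmetrization: with $V_n=L_{\sqrt n}T_n$ one forms $T^*(t)=\sum_{j=1}^{N}V_n\bigl(t_j(t)\bigr)$, $t_j=U_{N,j}^{-1}\circ U_N$, which \emph{is} a polynomial of $U_N$ (so the first, $P(U_N)$-case applies to it); the prescribed zeros of $L_{\sqrt n}$ at the extremal points of $U_N$ other than $a$ are needed precisely because Fa\`a di Bruno applied to $V_n(t_j(t))$ brings in derivatives $\frac{d^l}{dt^l}t_j\sim |U_N'(t_j)|^{-(2l-1)}\sim|t_j-a_j|^{-(2l-1)}$, which only the high-order zeros of $L_{\sqrt n}$ (through the factor $W$) can absorb, while also keeping $\|T^*\|_E\le(1+o(1))\|T_n\|_E$. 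Note also that \eqref{eq:Markov} is a bound on all of $[a-\rho,a]$, not just at $a$: the paper gets this from continuity of $U_N'$ near $a$ together with the rough Bernstein inequality \eqref{roughBernstein} away from $a$; your sketch addresses only the point $a$.

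The passage to general compact sets and the sharpness also fail as written. Your Bernstein--Walsh step needs $n\,\sup_{E_m^+}g_{E_\bT}\to 0$, hence $m=m(n)\to\infty$ quickly; but the $o(1)$ in the finite-union case depends on $E_m^+$ in an uncontrolled way, so it is only known to tend to $0$ for each \emph{fixed} $m$ (a slow diagonalization is legitimate, a fast one is not), and for irregular $E$ (allowed by the hypotheses) $\sup_{E_m^+}g_{E_\bT}$ does not even tend to $0$. The paper avoids both problems: for a fixed large $m$ it multiplies $T_n$ by a fast decreasing factor $Q_{n\varepsilon}$ so that the Green's-function growth $\exp\bigl(2n\,g(e^{it},0)\bigr)$ (for trigonometric polynomials the correct bound involves the two poles $0,\infty$, via the associated rational function) is beaten on $E_m^+\setminus E$ under a regularity assumption, and then removes regularity separately using Ancona's theorem with regular subsets $E_m^-\subset E$ and Lemma~\ref{l:Omegalimit}. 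Finally, for sharpness your containment goes the wrong way: with a T-set $E^{(N)}\subset E$ the test polynomial $\mathcal{T}_l(U_N)$ has norm $1$ on $E^{(N)}$ but is exponentially large on $E\setminus E^{(N)}$, so dividing by $\|T_n\|_E$ destroys \eqref{eq:Markovacc}. One needs approximation from \emph{outside} ($E\subset E'$ for finite unions, and $E\subset E_m^+$ with $m_n\to\infty$ slowly in the general case), exactly so that $\|T_n\|_E\le\|T_n\|_{E'}=1$ while $\Omega(E'_\bT,e^{ia})\ge(1-\varepsilon)\Omega(E_\bT,e^{ia})$.
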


\begin{proof}
The proof 
of \eqref{eq:Markov} 
is divided into five steps
and then \eqref{eq:Markovacc} will be established. 

First step. 
We prove the assertion when
$E$ is a T-set, and $T_n$ is polynomial of the defining polynomial $U_N$ for this set.
That is, $E=\{t\in (-\pi,\pi): \  \left|U_N(t)\right|\le 1\}$ (as in \eqref{Tsetdef})
and there is a real, algebraic polynomial $P$ such that $T_n(t)=P(U_N(t))$.
We may assume that $U_N(a)=1$ (we know that $|U_N(a)|=1$).

Now we use Fa\`a di Bruno's formula 
\eqref{faadibruno}.
Note that, in our setting $f=P$ (outer function) and $g=U_N$ (inner function),
hence the product is independent of $P$ and $n$ (and $T_n$ too).
Hence we reorder the terms   decreasingly:
\begin{equation}\label{kderiv}
\left(P\circ U_N\right)^{(k)}(a)
=
P^{(k)}(1) \left(U_N'(a)\right)^{k}
+ \ldots
\end{equation}
where in the remaining terms only $P^{(k-1)}(1)$, $P^{(k-2)}(1)$, ... $P'(1) $ occur.
There are finitely many remaining terms and by \eqref{roughMarkov}, 
they grow like $n^{2k-2}$ as $n\rightarrow\infty$.
As for the first term, we can use the classical V.~Markov inequality (see e.g. \cite{BorweinErdelyi}, p.~254) and $\left\|P\right\|_{[-1,1]}=\left\|T_n\right\|_E$,
hence with $d:=\deg(P)$, 
\begin{equation*}
\left|P'(1)\right|\le
\frac{
d^2 (d^2-1)\ldots \left(d^2-(k-1)^2\right)
}{
(2k-1)!!
}
\left\|T_n\right\|_E
\le d^{2k} \frac{1}{(2k-1)!!}
\left\|T_n\right\|_E
\end{equation*}
where actually 
\begin{equation}\label{eq:limitbeh}
\frac{
d^2 (d^2-1)\ldots \left(d^2-(k-1)^2\right)
}{
d^{2k}
}
\rightarrow 1
\end{equation}
as $n\rightarrow\infty$ (which is equivalent 
to $d\rightarrow\infty$).

As for $U_N'(a)$, we use the density of the equilibrium measure, 
more precisely formula (3.21) from \cite{IMRN} (and $a=a_{2j_0})$, hence
\begin{equation}\label{eq:derofU}
|U_N'(a)|=
2 N^2 
\frac{\prod_{l=1}^{m} 
\left|e^{i a}-e^{i \tau_l}\right|^2
}{
\prod_{l=1,\ldots,2m,l\ne 2j_0 }
\left|e^{i a}-e^{i a_l} \right|
}
= 2 N^2 M_{a,k}
= 
8 \pi^2 N^2 \Omega(E_\bT, e^{ia})^2. 
\end{equation}

Putting these together:
\begin{equation*}
\left|T_n^{(k)}(a)\right|
\le 
(1+o(1))8^k \pi^{2k} \frac{1}{(2k-1)!!}
\Omega(E_\bT,e^{ia})^{2k} \; 
n^{2k} 
\left\|T_n\right\|_E.
\end{equation*}

Now we extend the previous inequality from $a$ to $[a-\rho,a]$ (as in \eqref{eq:Markov}).
Basically we use the smaller growth of the 
rough Bernstein-type inequality \eqref{roughBernstein} and the continuity of $U_N'$.
For any $\varepsilon>0$,
we can select $\eta>0$ such that $[a-\eta, a]\subset E$ 
and for $t\in [a-\eta,a]$ it is true that 
\begin{equation*}
|U_N'(t)|\le (1+\varepsilon)|U_N'(a)|=(1+\varepsilon)8 \pi^2 N^2 \Omega(E_\bT,e^{ia})^2.
\end{equation*}
Then for $t\in [a-\eta,a]$ we get from (\ref{kderiv}) and again from (\ref{roughMarkov})
that 
\begin{equation}
\left|T_n^{(k)}(t)\right|
\le 
(1+o(1))(1+\varepsilon)8^k \pi^{2k} \frac{1}{(2k-1)!!}
\Omega(E_\bT,e^{ia})^{2k} \; 
n^{2k} 
\left\|T_n\right\|_E.
\label{Bernnearendpoint}
\end{equation}
Now, on $[a-\rho,a-\eta]$ (if not empty),
we can use the rough Bernstein-type inequality \eqref{roughBernstein}, hence
we obtain an upper estimate for $T^{(k)}(t)$ which has growth order $n^k$,
which is smaller than $n^{2k}$, 
the growth order of the Markov factor.
So if $n$ is large (depending on $\varepsilon$), then 
\eqref{Bernnearendpoint} holds for
$t\in[a-\rho,a-\eta]$ too.
Now letting $\varepsilon\rightarrow 0$
appropriately,
(\ref{eq:Markov}) follows
for $T_n(.)=P(U_N(.))$ as $d=\deg(P)\rightarrow \infty$.

\medskip

Second step. 
Now we establish \eqref{eq:Markov} when $E$ is a T-set 
and $T_n$ is arbitrary trigonometric polynomial.
We use symmetrization here (see, \cite{Totik2001} pp.~151-152
and \cite{IMRN}, pp.~2997-2998, including Lemma 3.2) 
and fast decreasing trigonometric polynomials (see Subsection \ref{sec:fastdecrtrig}).
In this step we work in a smaller neighborhood of $a$, i.e. on $[a-\rho_0,a]$ where $\rho_0<\rho$ is defined later.

Let 
$j_0$ correspond to the interval in which $a$ is. 
More precisely,
since $E$ is a T-set in this case, 
there are $2N$ disjoint, open intervals
such that $U_N$ maps these intervals to $(-1,1)$ in a bijective way.
Let us label them by $E_j=(\alpha_{2j-1},\alpha_{2j})$ where $-\pi<\alpha_1<\alpha_2\le\alpha_3<\alpha_4\le\ldots\le\alpha_{2N-1}<\alpha_{2N}<\pi$.
Hence $a\in[\alpha_{2j_0 -1},\alpha_{2j_0}]$ and by \eqref{intervalcondition}, $a=\alpha_{2j_0}$.
Put $\rho_0:=1/4\min(\alpha_{2j_0}-\alpha_{2j_0-1},\alpha_{2j_0+1}-\alpha_{2j_0},\rho,\pi/4)$.

We also need the following facts on T-sets.
Since $U_{N}(.)$ is $2N$-to-$1$ mapping,
we need its restricted inverses.
Let $U_{N,j}^{-1}(t)$ be the inverse of $U_N$ restricted to $[\alpha_{2j-1},\alpha_{2j}]$
and put $t_j(t)=t_j:=U_{N,j}^{-1}(U_N(t))$.
Obviously, $t_j$ is $C^\infty$ on $\cup_{j=1}^N (\alpha_{2j-1},\alpha_{2j})$
and now we give estimates for the $l$-th derivative of $t_j(t)$, especially,
as $t$ approaches $a$.
Similarly, as in \cite{TZ},
if $l=1$ or $l=2$, then
\begin{gather*}
\frac{d t_j}{dt}=\frac{d}{dt}
U_{N,j}^{-1}(U_N(t))
=\frac{U_N'(t)}{U_N'(U_{N,j}^{-1}(U_{N}(t)))}
=\frac{U_N'(t)}{U_N'(t_j)},
\\
\frac{d^2}{dt^2}
U_{N,j}^{-1}(U_N(t))
=
\frac{-\left(U_N'(t)\right)^2 U_N''(U_{N,j}^{-1}(U_{N}(t)))}{\left(U_N'(U_{N,j}^{-1}(U_{N}(t)))\right)^3}
+
\frac{U_N''(t)}{U_N'(U_{N,j}^{-1}(U_{N}(t)))} \notag
\\ =
\frac{-(U_N'(t))^2 U_N''(t_j)}
{\left(U_N'(t_j)\right)^3} 
+
\frac{U_N''(t)}{U_N'(t_j)} \notag 
\end{gather*}
and for general $l$, Fa\`a di Bruno's formula \eqref{faadibruno} implies
that there is a universal polynomial $Q_{l}$
(independent of $U_N$, depending on $l$ only)
which is a polynomial in $U_N^{(k)}(t)$ and $U_N^{(k)}(t_j)$
$k=1,\ldots,l$,
that is $Q_l=Q_l(\ldots,U_N^{(k)}(t),\ldots, U_N^{(k)}(t_j),\ldots)$
such that
\begin{equation}
\frac{d^l}{dt^l}
U_{N,j}^{-1}(U_N(t))
=
\frac{Q_l}{\left(U_N'(t_j)\right)^{2l-1}}.
\label{dltj}
\end{equation}
Here, $Q_l$ is independent of $n$ and $T_n$, hence $|Q_l|\le C$ for some $C=C(k,U_N) >0$.

Moreover, we need to estimate
$|U_N'(t_j)|$ as $t\rightarrow a$
and we split the argument into two cases. 
If $j$ is such that $a_j\in \interior E$, that is, $U_N'(a_j)=0$, and using that
all the zeros of $U_N$ are simple,
we can infer that $U_N''(a_j)\ne 0$,
so $|U_N'(t_j)|\ge O(|t_j -a_j|)$.
On the other hand, if $j$ is such that $a_j\in E\setminus \interior E$,
that is, $U_N'(a_j)\ne 0$, then simply 
$U_N'(t_j)\approx U_N'(a_j)$.
Hence, in any case
\begin{equation}
\left|U_N'(t_j)\right|
\ge
O(|t_j -a_j|).
\label{UNprimelower}
\end{equation}

\smallskip

For an arbitrary polynomial $T_n$ 
consider $V_n(t)=L_{\sqrt{n}}(t)T_n(t)$, 
where $L_{\sqrt{n}}(.)$ denotes the fast decreasing polynomial which has the following properties.
$L_{\sqrt{n}}(.)$ has degree at most $\sqrt{n}$, it is a fast decreasing trigonometric polynomial and peaking at $a$ very smoothly (that is, $L_{\sqrt{n}}(a)=1$ and $L_{\sqrt{n}}^{(j)}(a)=0$, 
$j=1,2,\ldots,2k^2$), 
$L_{\sqrt{n}}(.)$ is approximately $1$ on $[a-\rho_0,a+\rho_0]$ and  is approximately $0$ outside $[a-2\rho_0,a+2\rho_0]$
and vanishes at the other extremal points of $U_N$ 
up to order $2k^2$ (that is, if $U_N(t)=\pm 1$, $t\ne a$, then $L_{\sqrt{n}}^{(j)}(t)=0$, $j=0,1,\ldots,2k^2$). 
Such polynomial $L(.)=L_{\sqrt{n}}(.)$ exists
because of Theorem \ref{thm:fdtp}.

For simplicity, put $W(t):=\prod_j \left(\sin\frac{t-\alpha_j}{2}\right)^{2k}$
where $j=1,\ldots,2N$, $j\ne j_0$.
This $W$ is a nonnegative trigonometric polynomial and has sup norm at most $1$.
There is another trigonometric polynomial $Y(.)$ such that
\begin{equation*}
L(t)= Y(t) W^k(t).
\end{equation*}
The sup norm of $Y$ over $[-\pi,a-\rho_0]\cup[a+\rho_0,\pi]$ can be estimated using
\eqref{fdtp:low}  with $W^k$ in place of $Z$.
Hence, for $t\in [-\pi,a-\rho_0]\cup[a+\rho_0,\pi]$
\begin{equation*}
|Y(t)|= \left|\frac{L(t)}{W^k(t)}\right|
\le 
\min\left(\frac{1}{W^k(t)},1\right)
\exp\left(-(\deg L) \delta_1\right).
\end{equation*}
Differentiating $L(.)$  $j$-times, $j=0,1,\ldots,k$ we write
\begin{equation}
L^{(j)}(t)= \sum_{l=0}^j
\binom{j}{l} Y^{(j-l)}(t) \left(W^k\right)^{(l)}(t).
\label{ljderivwithw}
\end{equation}
Here $\left(W^k\right)^{(l)}(t)=W(t)\cdot \ldots$
where $W(t)$ is multiplied with other terms
depending on $W,W',\ldots,W^{(l)}$, $k$ and $\alpha_j$'s only,
and it is independent from $n$ and $T_n$.
As regards $Y^{(j-l)}(t)$, we can use Videnskii's inequality for 
$Y(.)$ on  $[-\pi,a-\rho_0]\cup[a+\rho_0,\pi]$ (which is actually an interval on the torus), so 
there exists a $C>0$ such that
for all $t\in [-\pi,a-2\rho_0]\cup[a+2\rho_0,\pi]$
and all $l=0,1,\ldots,j$
\begin{equation}
\left|Y^{(j-l)}(t)\right|
\le 
C
\left(\deg Y\right)^{j-l}
\exp\left(-(\deg L) \delta_1\right).
\end{equation}
Summing up these estimates as in \eqref{ljderivwithw},
we can write with $\deg L\le  \sqrt{n}$
\begin{equation}
\left| L^{(j)}(t)\right|
\le 
C W(t)
n^{j/2} \exp\left(- \sqrt{n} \delta_1\right)
\label{LjestW}
\end{equation}
where $C>0$ is independent of $n$ and $T_n$
and $t\in [-\pi,a-2\rho_0]\cup[a+2\rho_0,\pi] $.

This $V_n$ has degree at most $n+\sqrt{n}$ 
and satisfies
\begin{equation}
\left.
\begin{aligned}
\left\|V_n\right\|_{E}
&\le
\left\|T_n\right\|_{E},
\\
V_n(t) &= 
\left(1+O(\beta^{\sqrt{n}})\right)
T_n(t) \mbox{ for } t\in[a-\rho_0,a],\\
\left|V_n(t)\right| 
&= O(\beta^{\sqrt{n}}) \left\|T_n\right\|_{E}
\mbox{ for } t\in E \setminus [a-2\rho_0,a]
\end{aligned}
\qquad \right\}
\label{estimatesforVn}
\end{equation}
where $\beta=\exp(-\delta_1)<1$.

Now, (by Leibniz formula), for all $l=1,\ldots,k$
\begin{equation}
V_n^{(l)}(t)-T_n^{(l)}(t)
=
\left( L_{\sqrt{n}}(t) -1
\right) T_n^{(l)}(t)
+ \sum_{j=1}^l
\binom{l}{j}
L_{\sqrt{n}}^{(j)}(t) 
T_n^{(l-j)}(t).
\label{leibniz}
\end{equation}
Using the rough Markov-type inequality \eqref{roughMarkov},
there exists a constant $C=C(E,k)>0$ 
such that for all $1\le j \le k$, $t\in E$
\begin{gather*}
\left|L_{\sqrt{n}}^{(j)}(t)\right|
\le C \sqrt{n}^{2j} 
\left\|L_{\sqrt{n}}\right\|_E
= C n^j,
\\
\left|T_n^{(j)}(t)\right|
\le C n^{2j} \left\|T_n\right\|_E
\end{gather*}
and if $t\in E\setminus [a-2\rho,a]$, 
then applying \eqref{roughMarkov} for $L_{\sqrt{n}}$ on $E\setminus [a-2\rho,a]$,
we can write
\begin{equation}
\left|L_{\sqrt{n}}^{(j)}(t)\right|
\le C \sqrt{n}^{2j} 
\left\|L_{\sqrt{n}}\right\|_{E\setminus [a-2\rho,a]}
=
C n^j \beta^{\sqrt{n}}.
\label{ljestimate}
\end{equation}
These imply that for $l=1,\ldots,k$
\begin{equation}
\left|
V_n^{(l)}(t)-T_n^{(l)}(t)
\right|
=O\left(n^{2l}\beta^{\sqrt{n}}
+ n^{2l-1}\right)
\left\|T_n\right\|_E,
\  t\in[a-\rho,a]
\label{vnktnkcompare}
\end{equation}
and
\begin{equation*}
\left|
V_n^{(l)}(t)
\right|
=
O\left(n^{2l} \beta^{\sqrt{n}}\right)
\left\|T_n\right\|_E,
\  t\in E\setminus[a-2\rho,a].
\end{equation*}

Define the "symmetrized" polynomial as
\begin{equation*}
T^*(t) := \sum_{j=1}^N V_n\left(t_j\right).
\end{equation*}
This $T^*$ will be algebraic polynomial of $U_N(.)$, see Lemma 3.2 in \cite{IMRN},  and $\deg(T^*)\le n+\sqrt{n}=(1+o(1))n$.

Now we compare $(T^*)^{(k)}(t)$ with $T_n^{(k)}(t)$  when $t\in[a-\rho,a]$.
If $j=j_0$, that is,
$t_j=t$, then $V_n(t_j)=V_n(t)$, and we can apply \eqref{vnktnkcompare}  (when $l=k$).  
If $j\ne j_0$, then we would like to show that $\left|\frac{d^k}{dt^k} V_n(t_j)\right|$ is small.
We use \eqref{leibniz} first so
\begin{equation}
\left|\frac{d^k}{dt^k} V_n(t_j)\right|
\le
\sum_{l=0}^k \binom{k}{l}
\left|\frac{d^l}{dt^l} 
L_{\sqrt{n}}
\left(  
U_{N,j}^{-1}(U_N(t))
\right)
\right|
\left|\frac{d^{k-l}}{dt^{k-l}} 
T_n
\left(  U_{N,j}^{-1}(U_N(t)) 
\right)
\right|
\label{dkvntj}
\end{equation}
which we continue later.
For the second factor, we use \eqref{faadibruno} again with similar groupings of the terms as in \eqref{kderiv},
because the first term involves $\frac{d^{k-l}}{dt^{k-l}}T_n$ 
(at $t_j$) and all the other terms involve lower derivatives of $T_n$.
So we can write, with the help of \eqref{roughMarkov},
and \eqref{dltj}, \eqref{UNprimelower}
\begin{multline*}
\left|\frac{d^{k-l}}{dt^{k-l}} 
T_n
\left(  U_{N,j}^{-1}(U_N(t)) 
\right)
\right|
\le 
\left|T_n^{(k-l)}(t_j)\right| 
\left|\frac{d}{dt} U_{N,j}^{-1}(U_N(t))
\right|^{k-l}
+\left|\ldots\right|
\\\le
C n^{2k-2l} 
\frac{1}{
\left|t_{j}-a_j\right|^{2(k-l)-1}} 
 \left\|T_n\right\|_{E}.
\end{multline*}

Now we use the zeros of $L_{\sqrt{n}}(.)$ 
(and $W(t)$) to 
get rid of the factors 
$1/\left|t_j-a_j\right|^{2(k-l)-1}$.
To estimate the first factor on rhs of \eqref{dkvntj}, 
we use \eqref{faadibruno} for $L_{\sqrt{n}}(.)$ and $U_{N,j}^{-1}(U_N(t))$
with \eqref{ljestimate} (since $t_j\not\in[a-2\rho,a]$)
and \eqref{LjestW}. 
Hence 
\begin{multline*}
\left|\frac{d^l}{dt^l} 
L_{\sqrt{n}}
\left(  
U_{N,j}^{-1}(U_N(t))
\right)
\right| 
\le C \sqrt{n}^{2l}
\beta^{\sqrt{n}} |W(t_j)| 
\,
n^{2l} 
\frac{1}{
\left|t_{j}-a_j\right|^{2l -1}} 
\left\|T_n\right\|_{E}
\\
= 
C n^{3l}
\beta^{\sqrt{n}}  
\frac{|W(t_j)|}{\left|t_{j}-a_j\right|^{2l -1}}
\left\|T_n\right\|_{E}
\end{multline*}
and using that  $a_j$ is a zero of $W$ (of order $k$),
the fraction $|W(t_j)|/\left|t_{j}-a_j\right|^{2l -1}$ is actually bounded.

Multiplying together the last two 
displayed estimates and using that 
$|W(t_j)|/|t_j-a_j|^{2k}$ is bounded
(independently of $t$, $j$ and $n$),
we can continue
\eqref{dkvntj},
\begin{equation*}
\le
\sum_{l=0}^k
\binom{k}{l} C n^l \beta^{\sqrt{n}} n^{2k-2l}
\left\|T_n\right\|_E
\le 
C n^{2k} \beta^{\sqrt{n}} \left\|T_n\right\|_E.
\end{equation*}
Collecting all the calculations in this paragraph,
for $t\in[a-\rho,a]$ we can write
\begin{equation}
\left|(T^*)^{(k)}(t) -T_n^{(k)}(t)\right|
\le
O\left( n^{2k} \beta^{\sqrt{n}}\right)
\left\|T_n\right\|_E.
\label{tnkcomparetnsk}
\end{equation}

Comparing the sup norms of $T_n$ and $T^*$,
we split the estimate into two cases
(see also \eqref{estimatesforVn}).
If $t\in E\setminus [a-2\rho_0,a]$, then
\begin{equation*}
\left|T^*(t)\right|
\le
\sum_{j=1}^N
\left|L_{\sqrt{n}}(t_j)\right|
\left|T_n(t_j)\right|
\le N C \beta^{\sqrt{n}} \left\|T_n\right\|_E
=o(1) \left\|T_n\right\|_E.
\end{equation*}
If $t\in[a-2\rho_0,a]$, then
\begin{multline*}
\left|T^*(t)\right|
\le 
\left|L_{\sqrt{n}}(t)\right|
\left|T_n(t)\right|
+
\sum_{j\ne j_0}
\left|L_{\sqrt{n}}(t_j)\right|
\left|T_n(t_j)\right|
\\ \le 
\left(1+N C \beta^{\sqrt{n}} \right)
\left\|T_n\right\|_E
=(1+o(1)) \left\|T_n\right\|_E.
\end{multline*}
These two estimates yield
\begin{equation}
\left\|T^*\right\|_E \le 
(1+o(1)) \left\|T_n\right\|_E.
\label{tstarsupnorm}
\end{equation}

Applying \eqref{tnkcomparetnsk}, \eqref{tstarsupnorm} and the previous case for $T^*$ (when $T^*$ is a polynomial of $U_N$),
we obtain \eqref{eq:Markov}
for T-sets and for arbitrary polynomials.

\medskip

Third step. 
Now let $E$ be an arbitrary set consisting of  finite number of intervals: $E=\cup_{j=1}^m [a_{2j-1},a_{2j}]$. 
Using the density of T-sets (see Section \ref{sec:density}),
there is a T-set $E'$ such that $E'\subset E$,
$a\in E'$ and
\begin{equation*}
\Omega(E_\bT, e^{ia})
\le 
\Omega(E'_\bT, e^{ia})
\le 
(1+\varepsilon) \Omega(E_\bT, e^{ia})
\end{equation*}
where $\varepsilon>0$ is arbitrary and $E'=E'(E,\varepsilon)$.
Here the first inequality comes from the monotonicity of $\Omega(.,.)$ 
(and from $E'\subset E$) and the second comes from the density result.
Obviously, $\left\|T_n\right\|_{E'}\le \left\|T_n\right\|_{E}$.
Now, applying the previous step (for arbitrary polynomials on T-sets), we can write
for $t\in[a-\rho,a]$
\begin{multline*}
\left|T_n^{(k)}(t)\right|
\le 
(1+o_{E'}(1))
\frac{8^k \pi^{2k}  }{(2k-1)!!}
n^{2k}
\Omega(E'_\bT,e^{ia})^{2k}
\left\|T_n\right\|_{E'}
\\ \le 
(1+o_{E}(1))
\frac{8^k \pi^{2k}  }{(2k-1)!!}
n^{2k}
\Omega(E_\bT,e^{ia})^{2k}
\left\|T_n\right\|_{E}
\end{multline*}
by letting $\varepsilon\rightarrow 0$ appropriately.

\medskip

Fourth step. 
Now let $E\subset (-\pi,\pi)$ be a compact set
which is regular (in the sense of Dirichlet problem). 
Obviously, the regularity of $E$ and $E_\bT$ are equivalent.

Consider the trigonometric polynomial $T_n Q_{n\varepsilon}$ of degree at most $n(1+\varepsilon)$ 
where $Q(.)=Q_{n\varepsilon}(.)$ is the fast decreasing polynomial with the following properties:
its degree is at most $n\varepsilon$,
$0\le Q(.)\le 1$, $Q(t)\le \exp(-\delta_1 n\varepsilon)$ 
for some $\delta_1>0$ on $t\in [-\pi,a-2\rho]\cup[a+2\rho,\pi]$,
$1-\exp(-\delta_1 n\varepsilon)\le Q(t)$ on $t\in[a-\rho,a+\rho]$ 
and $Q(a)=1$
(for existence, see Section \ref{sec:fastdecrtrig}).

Let $g_{E_{\mathbb{T}}}(\zeta,0)$ and  $g_{E_{\mathbb{T}}}(\zeta,\infty)$ be 
the Green functions of the domain $\overline{\mathbf{C}}\setminus E_{\mathbb{T}}$ with poles 
at the points $0$ and $\infty$, respectively. 
The regularity of the set $E$ (and $E_{\mathbb{T}}$ correspondingly) 
implies the continuity of $g_{E_{\mathbb{T}}}(\zeta,0)$ and $g_{E_{\mathbb{T}}}(\zeta,\infty)$ at all points different from $0$ and $\infty$, 
as well as the fact that these functions vanish at the points of $E_{\mathbb{T}}$. 
Therefore, for the $\delta_1>0$ 
there is a $d_1>0$, such that if $t\in \mathbf{R}$ and ${\rm dist}(t,E)\le d_1$, then 
\begin{equation}\label{eq:Greenest}
g_{E_{\mathbb{T}}}(e^{it},0)<\frac{\delta_1^2}{2}.
\end{equation}
We choose $m$ sufficiently so large that for the set $E_m^+$ the condition ${\rm dist}(t,E)\le d_1$ for all $t\in E_m^+$ is satisfied.

If $t\in E$ then 
\[
|T_n(t)Q_{n\varepsilon}(t)|\le \|T_n\|_E.
\]
If we write
\begin{multline*}
T_n(t)=\sum_{j=0}^{n} \left(A_j\cos jt+B_j\sin jt\right)
\\=
\sum_{j=0}^{n}\left({\rm Re} A_j \cos jt + 
{\rm Re} B_j \sin jt\right)+
i \sum_{j=0}^{n}\left({\rm Im} A_j \cos jt 
+ {\rm Im} B_j \sin jt\right),
\end{multline*}
we consider the algebraic polynomials 
\[
S^{(1)}_n(z)=
\sum_{j=0}^n\left({\rm Re} A_j-i{\rm Re} B_j\right)z^j, \ \ S^{(2)}_n(z)=\sum_{j=0}^n\left({\rm Im} A_j-i{\rm Im} B_j\right)z^j.
\]
It is easy to verify that $T_n(t)=F(e^{it})$ for all complex $t$, where
\begin{equation*}
F(z):=\frac12\left[S^{(1)}_n(z)+\overline{S^{(1)}_n\left(\frac{1}{\overline{z}}\right)}\right]
+
\frac{i}{2}\left[S^{(2)}_n(z)+\overline{S^{(2)}_n\left(\frac{1}{\overline{z}}\right)}\right]
\end{equation*}
is a rational function.
We note that $\|F\|_{E_{\mathbb{T}}}=\|T_n\|_{E}$ and apply an analog of the Bernstein-Walsh inequality  
(see e.g. \cite{Gonchar}, p. 64) 
to the rational function $F$ on $E_\bT$ and 
then use the fact that the domain $\overline{\mathbf{C}}\setminus E_{\mathbb{T}}$ is symmetric with respect to the unit circle. 
For simplicity, we put 
\begin{equation*}
g(z,w)=g_{\overline{\bC}\setminus E_\bT}(z,w)
\end{equation*} for 
Green's function of $E_\bT$.
So,  we have for $t\in\bR$ that
\begin{multline*}
|T_n(t)|=
\left|F\left(e^{it}\right)\right|
\le 
\|F\|_{E_{\mathbb{T}}} 
\exp\left(n\left(g(e^{it}, 0)
+
g(e^{it},\infty)\right) \right)
\\=
\|T_n\|_E 
\exp\left(2n g(e^{it},0)\right).
\end{multline*}
Now if $t\in E_m^+\setminus E$ then it follows from \eqref{fdtp:nonneg} 
and (\ref{eq:Greenest}) that
\begin{multline*}
|T_n(t)Q_{n\varepsilon}(t)|
\le 
\|T_n\|_{E} 
\exp\left(2 n g(e^{it},0)\right) \exp\left(-n\delta_1\right)
\\
\le 
\|T_n\|_E 
\exp\left(n\delta_1^2 -n\delta_1\right)
\le \|T_n\|_E
\end{multline*}
for  sufficiently large $n$, and hence $\|T_nQ_{n\varepsilon}\|_{E_m^+}\le \|T\|_E$.

For $t\in [a-\rho,a]$
\[
\left|(T_n Q_{n\varepsilon})^{(k)}(t)\right|
\ge 
\left|T^{(k)}(t)Q_{n\varepsilon}(t)\right|
-
\sum_{j=1}^{k}
\binom{k}{j}
\left|T_n^{(k-j)}(t)Q_{n\varepsilon}^{(j)}(t)\right|.
\]
Here $1-e^{-n\delta_1}\le Q_{n\varepsilon}(t)\le 1$ and 
by \eqref{roughMarkov}
\[
\|Q^{(j)}_{n\varepsilon}\|_E
\le 
C(n\varepsilon)^{2j}, \ \ \|T_n^{(j)}\|_E\le C n^{2j} \|T_n\|_E 
\]
with some constant $C$ for all $j=1,2,\dots, k$. 
Hence, if $t\in [a-\rho,a]$ we get from the previous 
step applied to the trigonometric polynomial $T_n(t)Q_{n\varepsilon}(t)$ 
on the set $E_m^+$ (which consists of finitely many intervals) 
that
\begin{multline*}
\left|T_n^{(k)}(t)\right|
\left(1-e^{-n\delta_1}\right)
\le 
\left|(T_nQ_{n\varepsilon})^{(k)}(t)\right|+\sum_{j=1}^{k}\binom{k}{j}C^2 \|T_n\|_{E} 
n^{2(k-j)}(n\varepsilon)^{2j}
\\
\le 
(1+o(1)) 8^k \pi^{2k} \frac{1}{(2k-1)!!}
\Omega\left((E^+_m)_\bT,e^{ia}\right)^{2k} 
\; 
(n(1+\varepsilon))^{2k}
\left\|T_n Q_{n\varepsilon}\right\|_{E_m^{+}} + C_1\varepsilon^2n^{2k}\|T_n\|_E \\
\le 
\frac{n^{2k}}{(2k-1)!!}\|T_n\|_E\left(\left(1+o(1)\right)(1+\varepsilon)^{2k}8^k\pi^{2k}
\Omega\left((E^+_m)_\bT,e^{ia}\right)^{2k}
+C_1\varepsilon^2\right).
\end{multline*}
Since $\varepsilon>0$ and $m$ are arbitrary, the inequality (\ref{eq:Markov}) follows from 
Lemma 
\ref{l:Omegalimit}.

Fifth step. 
The regularity condition can be removed using the sets $E_m^-$ 
and $(E_m^-)_{\mathbb{T}}$ from Ancona's theorem 
(interval condition \eqref{intervalcondition} implies $[a-\rho,a]\subset E$, hence $\capacity (E)>0$). 
Indeed, 
\begin{multline*}
\|T_n^{(k)}\|_{[a-\rho,a]}
\le 
\left(1+o_m(1)\right)\frac{n^{2k}}{(2k-1)!!}8^k\pi^{2k}
\Omega\left((E_m^-)_{\bT},e^{ia}\right)
\|T_n\|_{E_m^-}
\\
\le 
\left(1+o_m(1)\right)\frac{n^{2k}}{(2k-1)!!}8^k\pi^{2k}
\Omega\left((E_m^-)_{\bT},e^{ia}\right)
\|T_n\|_{E}
\end{multline*}
where $o_m(1)$ depends on $E_m^-$ too.

It follows from 
Lemma 
\ref{l:Omegalimit} that $\Omega(\left(E_m^-\right)_{\mathbb{T}},e^{ia})$ can  be made arbitrary close to $\Omega(E_{\mathbb{T}},e^{ia})$ by choosing $m$ large enough. Hence the inequality (\ref{eq:Markov}) holds in this case too. 

\bigskip

Now we investigate the 
sharpness, that is, we are going to establish \eqref{eq:Markovacc}.
As above, first we show it for the case when $E$ is a union of finitely many intervals. 
We select a T-set 
as in Section \ref{sec:density}
for which  $\Omega(E'_{\mathbb{T}},e^{ia})$ is close to $\Omega(E_{\mathbb{T}},e^{ia})$, say $\Omega(E'_{\mathbb{T}},e^{ia})\ge \Omega(E_{\mathbb{T}},e^{ia})(1-\varepsilon)$ for some given $\varepsilon>0$. 

By (\ref{eq:derofU})
\begin{equation}\label{eq:derofU1}
|U_N'(a)|=
8 \pi^2 N^2 \Omega(E'_\bT, e^{i a})^2.
\end{equation}
Now note that if $\mathcal{T}_l(x)=\cos(l\arccos (x))$ are classical Chebyshev polynomials, 
then $T_n(t):=\mathcal{T}_l(U_N(t))$ is a trigonometric polynomial of degree $lN$ for which 
\[
E'=\{x|\mathcal{T}_l(U_N(x))\in [-1,1]\}.
\]
Since 
\[
\left|\mathcal{T}_l^{k}(\pm 1)\right|= \frac{l^2(l^2-1)\dots(l^2-(k-1)^2)}{(2k-1)!!}=:C_{l,k}.
\]
and (\ref{eq:derofU1}) we get for $n=lN$ as before 
\[
\left|T_n^{(k)}(a)\right|
=
\left|(\mathcal{T}_l(U_N))^{(k)}(a)\right|
=
(1\pm o(1))C_{l,k}N^{2k}8^k \pi^{2k}  \Omega(E'_\bT,e^{i a})^{2k},
\]
and here, in view of (\ref{eq:limitbeh}),
\[
C_{l,k}N^{2k}\Omega(E'_\bT,e^{i a})^{2k}\ge (1-o(1))\frac{l^{2k}}{(2k-1)!!}N^{2k}\Omega(E_\bT,e^{i a})^{2k}(1-\varepsilon)^{2k}.
\]

Since $E\subset E'$ we have 
\[
\|T_n\|_E\le \|T_n\|_{E'}=\|\mathcal{T}_l\|_{[-1,1]}=1,
\]
and so from $n=lN$ we get 
\[
\left|T_n^{(k)}(a)\right|\ge (1-o(1))^2(1-\varepsilon)^{2k} \frac{n^{2k}}{(2k-1)!!}8^k \pi^{2k}  \Omega(E_\bT,e^{i a})^{2k}\|T_n\|_E.
\]
This is only for integers $n$ of the form $n=lN$. 
For others just use 
$T_n(t)=\mathcal{T}_{[n/N]}(U_N(t))+\delta \cos(nt)$ with $\delta>0$ very small. 
Since here $\varepsilon=\varepsilon_N>0$ is arbitrary, (\ref{eq:Markovacc}) follows if we let $N$ tend to $\infty$ slowly and at the same time $U_N^{-1}[-1,1]$ 
approaches 
$E$, as $n\rightarrow \infty$ (in which case we have $\varepsilon_N\rightarrow 0$).

In the general case we consider the sets $E_m^+$ that are unions of finitely many intervals. Hence, we may  use the last result for $E_m^+$, namely,
there is a sequence of nonzero trigonometric polynomials $\{T_{m,n}\}_{n=1}^{\infty}$, $\deg(T_{m,n})\le n$, such that 
\[
\left|T_{m,n}^{(k)}(a)\right|\ge (1-o_{E_m^+}(1)) n^{2k} 
\Omega\left((E_m^+)_\bT,e^{i a}\right)^{2k} 
\frac{8^k \pi^{2k}}{(2k-1)!!} 
\left\|T_{m,n}\right\|_{E_m^+},
\]
where $o_{E_m^+}(1)$ depends on $E_m^+$ and it tends to $0$ as $n\rightarrow \infty$ for any fixed $m$. Since $E\subset E_m^+$, we have $\|T_{m,n}\|_{E_m^+}\ge \|T_{m,n}\|_{E}$ and hence 
\[
\left|T_{m,n}^{(k)}(a)\right|\ge (1-o_{E_m^+}(1)) n^{2k} 
\Omega\left((E_m^+)_\bT,e^{i a}\right)^{2k} 
\frac{8^k \pi^{2k}}{(2k-1)!!} 
\left\|T_{m,n}\right\|_{E}.
\]
By 
Lemma 
\ref{l:Omegalimit} and choosing $m$ sufficiently large, 
$\Omega\left((E_m^+)_\bT,e^{i a}\right)$ 
can be made arbitrary close to $\Omega(E_\bT,e^{i a})$. Therefore, (\ref{eq:Markovacc}) follows for $T_{n}:=T_{m_n,n}$ if $m_n$ goes slowly to infinity as $n\rightarrow \infty$. 
\end{proof}

Now if 
$H$ denotes the shorter 
arc on $\mathbb{T}$ connecting the points $e^{i(a-\rho)}$ and $e^{ia}$ 
then we have the following assertion.
\begin{corollary}\label{cor:foralg}
Under the conditions mentioned above for any algebraic polynomial $P_n$ with degree $n$, we have
\begin{equation}\label{eq:MarkovAlg}
\left\|P_n^{(k)}\right\|_H
\le 
(1+o(1)) n^{2k} \Omega({E}_\bT,e^{i a})^{2k} \frac{2^k \pi^{2k}}{(2k-1)!!} 
\left\|P_n\right\|_{E_{\mathbb{T}}}.
\end{equation}

This inequality is sharp, for there is a sequence of polynomials $P_n \not\equiv 0$, $n=1,2,\dots$, such that
\begin{equation}\label{eq:MarkovAlgrev}
\left|P_n^{(k)}(e^{i a})\right|
\ge 
(1-o(1)) n^{2k} \Omega(E_\bT,e^{i a})^{2k} \frac{2^k \pi^{2k}}{(2k-1)!!} 
\left\|P_n\right\|_{E_{\mathbb{T}}}.
\end{equation}
The quantity $o(1)$ depends on $E$ and $k$ and tends to $0$ as $n\rightarrow\infty$.
\end{corollary}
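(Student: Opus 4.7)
The plan is to reduce (\ref{eq:MarkovAlg}) to Theorem \ref{highertrigMarkov} via the substitution $z=e^{it}$, accompanied by the symmetric shift $z^{-n/2}$ that halves the effective trigonometric degree. The factor $(1/2)^{2k}=1/4^k$ produced by this halving is precisely what converts the trigonometric constant $8^k\pi^{2k}$ into the algebraic constant $2^k\pi^{2k}$.

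Concretely, for $P_n$ of degree $n$ set $F(t):=e^{-int/2}P_n(e^{it})$ (for odd $n$ use $e^{-i\lfloor n/2\rfloor t}P_n(e^{it})$; the asymptotic constants shift only by $o(1)$). Then $F$ is a complex-valued trigonometric polynomial of degree $n/2$ satisfying $|F(t)|=|P_n(e^{it})|$, so $\|F\|_E=\|P_n\|_{E_\bT}$. For fixed $t_0\in[a-\rho,a]$ set $\theta_0:=-\arg F^{(k)}(t_0)$ and $\widetilde F(s):=\operatorname{Re}(e^{i\theta_0}F(s))$; this is a real trigonometric polynomial of degree $n/2$ with $\|\widetilde F\|_E\le\|F\|_E$ and $\widetilde F^{(k)}(t_0)=|F^{(k)}(t_0)|$. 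Theorem \ref{highertrigMarkov} applied to $\widetilde F$ on $E$ yields
$$|F^{(k)}(t_0)|\le(1+o(1))(n/2)^{2k}\frac{8^k\pi^{2k}}{(2k-1)!!}\Omega(E_\bT,e^{ia})^{2k}\|P_n\|_{E_\bT}=(1+o(1))\,n^{2k}\frac{2^k\pi^{2k}}{(2k-1)!!}\Omega(E_\bT,e^{ia})^{2k}\|P_n\|_{E_\bT}.$$

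It remains to pass from $|F^{(k)}(t_0)|$ to $|P_n^{(k)}(e^{it_0})|$. Leibniz applied to $F=e^{-int/2}\cdot P_n(e^{it})$ combined with Fa\`a di Bruno (\ref{faadibruno}) for $(P_n\circ e^{i\cdot})^{(j)}$ produces the expansion
$$F^{(k)}(t)=i^k e^{i(k-n/2)t}P_n^{(k)}(e^{it})+\sum_{\ell=0}^{k-1}A_{k,\ell}(n,t)\,P_n^{(\ell)}(e^{it}),\qquad |A_{k,\ell}(n,t)|=O(n^{k-\ell}),$$
the $n$-dependence arising from the $(-in/2)^{k-j}$ factors in Leibniz. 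An induction on $\ell$ using the same identity at lower orders, combined with the rough Markov inequality (\ref{roughMarkov}) applied to the real and imaginary parts of $F$, gives the rough bound $|P_n^{(\ell)}(e^{it_0})|=O(n^{2\ell}\|P_n\|_{E_\bT})$. The tail in the display above is therefore $O(n^{2k-1}\|P_n\|_{E_\bT})$, an $O(1/n)$ correction that is absorbed into the $(1+o(1))$ of the main estimate, proving (\ref{eq:MarkovAlg}).

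For the sharpness (\ref{eq:MarkovAlgrev}), we reverse the correspondence: take the extremal real trigonometric polynomial $T_m^*$ of degree $m$ from (\ref{eq:Markovacc}). Its Fourier expansion $T_m^*(t)=\sum_{j=-m}^m b_j e^{ijt}$ with $b_{-j}=\overline{b_j}$ gives $e^{imt}T_m^*(t)=P_{2m}(e^{it})$ for an algebraic polynomial $P_{2m}$ of degree $2m$, with $\|P_{2m}\|_{E_\bT}=\|T_m^*\|_E$. The same Leibniz/Fa\`a di Bruno identity read in reverse yields $|T_m^{*(k)}(a)|\le|P_{2m}^{(k)}(e^{ia})|+O(m^{2k-1})\|P_{2m}\|_{E_\bT}$, so substituting $n=2m$ (hence $m^{2k}=n^{2k}/4^k$) converts $8^k$ into $2^k$ and delivers (\ref{eq:MarkovAlgrev}) along even $n$; odd $n$ is reached by the $\delta\cos(nt)$-style perturbation used at the end of the proof of Theorem \ref{highertrigMarkov}. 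The main technical obstacle is the induction establishing the rough bound $|P_n^{(\ell)}|=O(n^{2\ell}\|P_n\|_{E_\bT})$ at the arc endpoint, together with the parity bookkeeping, so that the low-order corrections never spoil the leading constant $2^k\pi^{2k}$.
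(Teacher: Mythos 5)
Your proposal is correct and follows essentially the same route as the paper: reduce to Theorem \ref{highertrigMarkov} via $T_{n/2}(t)=e^{-itn/2}P_n(e^{it})$, so that the degree halving turns $8^k\pi^{2k}$ into $2^k\pi^{2k}$, and obtain sharpness by reversing the correspondence using the extremal trigonometric polynomials of \eqref{eq:Markovacc}. The paper states this reduction in two lines, while you additionally spell out the real-part rotation trick and the Leibniz/Fa\`a di Bruno bookkeeping with the rough bound $|P_n^{(\ell)}|=O(n^{2\ell})\|P_n\|_{E_\bT}$ that shows the lower-order terms are $O(n^{2k-1})$ and hence harmless — details the paper leaves implicit.
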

\begin{proof}
We may assume that $n$ is even (because $(n+1)^2/n^2=1+o(1)$). We consider the trigonometric polynomial $T_{n/2}(t)=e^{-itn/2}P_n\left(e^{it}\right)$. 
So, (\ref{eq:MarkovAlg}) follows now from
applying Theorem~\ref{highertrigMarkov} to $T_{n/2}$.

Concerning (\ref{eq:MarkovAlgrev}), existence of such polynomials, in view of the remark above, follows from existence of trigonometric polynomials $T_{n}$ for which (\ref{eq:Markovacc}) holds.

\end{proof}

\section{Higher order Bernstein-type inequalities and their sharpness}

Let $E\subset (-\pi,\pi)$ be a compact subset,
and fix a point $z_0=e^{i t_0}$ which is
in the one dimensional interior of $E_\bT$.
That is, $\{\exp(it): t_0-\delta<t<t_0+\delta\}\subset E_\bT$ for some small $\delta>0$.
Denote by $\partial/\partial {\bf n}_+$ and $\partial/\partial {\bf n}_-$ 
the outward and inward normal derivatives (w.r.t. the unit circle) correspondingly.
Then  (see \cite{NTRiesz}, formulas (23) and (24) on p. 349) 
\begin{equation*}
\frac12 \left(1+2\pi\omega_{E_{\mathbb{T}}}\left(e^{it}\right)\right)
=
\frac{\partial g(e^{it},\infty)}{\partial {\bf n}_+}
=
\max \left(\frac{\partial g(e^{it},\infty)}{\partial {\bf n}_+}, \frac{\partial g(e^{it},\infty)}{\partial {\bf n}_{-}}\right)
\end{equation*}
where 
$g(z,w)=g_{\overline{\bC} \setminus E_\bT}(z,w)$ 
is Green's function of $\overline{\bC}\setminus E_\bT$
and $\omega_{E_\bT}(.)$ denotes the density 
of the equilibrium measure (w.r.t. arc length on the unit circle).

\medskip

Now let us consider higher order 
Bernstein-type inequalities for trigonometric polynomials.  

\begin{theorem}\label{thm:hBern}
Let $E\subset (-\pi,\pi)$ be 
a compact set 
and $k$ be a positive integer.
Fix a closed 
interval
$E_0\subset \interior E$
(subset of the one dimensional interior of $E$).
Then
there exists $C=C(E,E_0,k)>0$ such that
for all trigonometric polynomial $T_n$ with degree $n$,
we have for $t\in E_0$
\begin{equation}
\label{eq:Bernstein}
\left|T_n^{(k)}(t)\right|
\le 
(1+o(1)) n^{k} \left(2\pi\omega_{E_{\mathbb{T}}}\left(e^{it}\right)\right)^k
\left\|T_n\right\|_E .
\end{equation}
where $o(1)$ is uniform in $t\in E_0$ 
and  uniform among all trigonometric polynomials having degree at most $n$ and
tends to $0$ as $n\rightarrow \infty$.
\end{theorem}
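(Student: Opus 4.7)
My plan is to mirror the five-step structure of the proof of Theorem~\ref{highertrigMarkov}: (i) $E$ a T-set with $T_n=P\circ U_N$; (ii) $E$ a T-set with arbitrary $T_n$; (iii) $E$ a finite union of intervals; (iv) $E$ a regular compact set; (v) $E$ a general compact set. The conceptual change from the endpoint case is that V.~Markov's inequality for $|P^{(k)}(\pm 1)|$ is replaced by the asymptotically sharp interior Bernstein inequality
\[
|P^{(k)}(x)|\le (1+o(1))\,\frac{d^k}{(1-x^2)^{k/2}}\|P\|_{[-1,1]},\qquad d=\deg P\to\infty,
\]
uniformly for $x$ in compact subsets of $(-1,1)$, and the rough Markov inequality \eqref{roughMarkov} is replaced wherever possible by the rough interior Bernstein inequality \eqref{roughBernstein}.

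In step (i), Fa\`a di Bruno \eqref{faadibruno} writes $(P\circ U_N)^{(k)}(t)$ as the leading term $P^{(k)}(U_N(t))(U_N'(t))^k$ plus a sum of terms of the form $P^{(m)}(U_N(t))\,\Pi_m(t)$ with $m<k$ and $\Pi_m$ a universal polynomial expression in $U_N',\ldots,U_N^{(k)}$. Since $E_0\subset\interior E$, one has $|U_N(t)|\le 1-\eta$ on $E_0$ for some $\eta>0$, so the interior Bernstein inequality gives $|P^{(m)}(U_N(t))|=O(d^m)\|P\|_{[-1,1]}$ while classical Bernstein for the trigonometric polynomial $U_N$ gives $|\Pi_m(t)|=O(N^k)$; hence the subleading sum is $O(d^{k-1}N^k)\|T_n\|_E=o(n^k)\|T_n\|_E$. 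For the leading term I would use the interior identity
\[
|U_N'(t)|=2\pi N\,\omega_{E_\bT}(e^{it})\sqrt{1-U_N^2(t)},\qquad t\in\interior E,
\]
which is the interior analog of the endpoint formula (3.21) of \cite{IMRN} used in the proof of Theorem~\ref{highertrigMarkov}; it is immediate in the model Chebyshev case $U_N(t)=\cos Nt$ (where $\omega_\bT=1/(2\pi)$) and follows in general from the product representation (5.11) of \cite{PeherstorferSteinbauer}. After combining, the $(1-U_N^2)^{\pm k/2}$ factors cancel, and using $\|P\|_{[-1,1]}=\|T_n\|_E$ with $n=dN$ one obtains $(1+o(1))\,n^k(2\pi\omega_{E_\bT}(e^{it}))^k\|T_n\|_E$ uniformly for $t\in E_0$.

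Step (ii) adapts the symmetrization from the proof of Theorem~\ref{highertrigMarkov}. Fix $t_0\in E_0$; using Theorem~\ref{thm:fdtp} build a trigonometric fast decreasing polynomial $L_{\sqrt n}$ of degree at most $\sqrt n$ peaked at $t_0$ and vanishing to order $2k^2$ at the other extremal points of $U_N$; set $V_n=L_{\sqrt n}T_n$ and $T^*(t)=\sum_{j=1}^N V_n(t_j(t))$, which is an algebraic polynomial of $U_N(t)$ of degree $(1+o(1))n$. Because $t_0\in\interior E$, the rough Markov bounds used in the proof of Theorem~\ref{highertrigMarkov} can be replaced by the rough interior Bernstein bound \eqref{roughBernstein} for $T_n$ and by the classical Bernstein inequality on $\bT$ for the low-degree polynomial $L_{\sqrt n}$ (which gives $|L_{\sqrt n}^{(j)}|\le n^{j/2}$ everywhere); the Leibniz expansion then yields $|V_n^{(k)}(t_0)-T_n^{(k)}(t_0)|=O(n^{k-1/2}+n^k\beta^{\sqrt n})\|T_n\|_E=o(n^k)\|T_n\|_E$. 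The contributions of $V_n(t_j(t))$ with $j\ne j_0$ are controlled exactly as in \eqref{dkvntj}, but with the same interior Bernstein estimates in place of the Markov-type ones, and give $O(n^k\beta^{\sqrt n})\|T_n\|_E$; applying step (i) to $T^*$ then yields the bound at $t_0$, and uniformity in $t_0\in E_0$ follows from continuity of $\omega_{E_\bT}$ on $\interior E$ together with compactness of $E_0$.

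Steps (iii)--(v) follow the corresponding parts of the proof of Theorem~\ref{highertrigMarkov} essentially verbatim. In (iii) I would sandwich $E$ by a T-set $E'\subset E$ with $\omega_{E'_\bT}(e^{it})$ uniformly close to $\omega_{E_\bT}(e^{it})$ on $E_0$ and use $\|T_n\|_{E'}\le\|T_n\|_E$; in (iv) multiply $T_n$ by a trigonometric fast decreasing polynomial $Q_{n\varepsilon}$ from Theorem~\ref{thm:fdtp} peaking on a neighborhood of $E_0$, apply the Bernstein--Walsh bound via Green's function of $\overline\bC\setminus E_\bT$ to control $\|T_nQ_{n\varepsilon}\|_{E_m^+}\le(1+o(1))\|T_n\|_E$, invoke step (iii) on the finite-union-of-intervals approximant $E_m^+$, and pass to the limit via Lemma~\ref{l:Omegalimit}; in (v) I would replace $E$ by the regular approximant $E_m^-$ supplied by Ancona's theorem and again pass to the limit via Lemma~\ref{l:Omegalimit}. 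The main obstacle I anticipate is the interior formula for $|U_N'(t)|$ in step (i): the whole sharp constant $(2\pi\omega_{E_\bT}(e^{it}))^k$ in \eqref{eq:Bernstein} hinges on this identity, and once it is in hand the rest is a careful but routine adaptation of the machinery already developed in the paper.
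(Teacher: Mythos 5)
Your overall architecture (redoing the five-step proof of Theorem~\ref{highertrigMarkov} with interior estimates) founders at step (i), and the obstacle is not the one you flag. The identity $|U_N'(t)|=2\pi N\,\omega_{E_\bT}(e^{it})\sqrt{1-U_N^2(t)}$ on $\interior E$ is indeed the standard T-set identity underlying the $k=1$ case and is unproblematic. The real gap is your assumption that $|U_N(t)|\le 1-\eta$ on $E_0$. For the T-sets $E'\subset E$ used in step (iii) one has $N\to\infty$, and the $2N$ branch intervals of $U_N$ each carry equilibrium measure $1/(2N)$; hence for large $N$ the fixed interval $E_0$ contains on the order of $N$ \emph{interior} extremal points of $U_N$, so $1-U_N^2(t)$ is not bounded below on $E_0$. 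Near such points both of your key estimates fail: $|P^{(m)}(U_N(t))|$ is no longer $O(d^m)\|P\|_{[-1,1]}$ but only $O(d^{2m})\|P\|_{[-1,1]}$ (V.~Markov), so for $k\ge 2$ the ``subleading'' Fa\`a di Bruno terms admit only bounds of order $d^{2(k-1)}$, which is not $o(n^k)\asymp o(d^k)$ for fixed $N$; and the interior bound $|P^{(k)}(x)|\le (1+o(1))d^k(1-x^2)^{-k/2}\|P\|_{[-1,1]}$ degenerates as $x=U_N(t)\to\pm1$ while the compensating factor $(1-U_N^2(t))^{k/2}$ from $(U_N'(t))^k$ vanishes — at an interior extremal point the ``leading'' term is identically zero and the whole size $n^k$ must come from the lower-order terms, so the decomposition ``leading term plus negligible rest'' is structurally wrong there. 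The same defect propagates to step (ii), since the symmetrized polynomial $T^*$ is again a polynomial in $U_N$ and step (i) is invoked for it at $t_0\in E_0$. Consequently the uniformity of the $o(1)$ over $E_0$, which is part of the statement, is exactly what your argument does not deliver; handling interior critical points is the known hard part (it is what forced the Remez-type analysis in \cite{TZ} for the algebraic Markov case), and your proposal assumes it away.

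For contrast, the paper avoids the T-set machinery here altogether: it proves \eqref{eq:Bernstein} by induction on $k$, with the base case $k=1$ being Lukashov's sharp Bernstein inequality \cite{MR2069196}. In the induction step it forms $f_{k,n,t_0}=T_n^{(k)}Q_{n^{1/3}}$ with $Q_{n^{1/3}}$ a fast decreasing polynomial from Theorem~\ref{thm:fdtp} peaked at $t_0$; the induction hypothesis on a slightly larger interval $E_0^*$ together with \eqref{fdtp:low} and \eqref{roughMarkov} gives $\|f_{k,n,t_0}\|_E\le (1+o(1))n^k\bigl(2\pi\omega_{E_\bT}(e^{it_0})\bigr)^k\|T_n\|_E$, then the $k=1$ inequality applied to $f_{k,n,t_0}$ at $t_0$, plus $f_{k,n,t_0}'(t_0)=T_n^{(k+1)}(t_0)+T_n^{(k)}(t_0)Q'(t_0)$ with the second term $O(n^{k+2/3})\|T_n\|_E$, yields the case $k+1$. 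If you want to keep your route, you must either prevent $t$ from approaching the extremal points of $U_N$ (incompatible with $N\to\infty$ in step (iii)) or supply a genuine argument at interior extremal points in the spirit of \cite{TZ}; as written, step (i) fails for $k\ge2$.
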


\begin{proof}

We prove the theorem by induction on $k$, the case $k=1$ was done in \cite[Theorem 4]{MR2069196}.

Let
$$
V(t)=2\pi\omega_{E_{\mathbb{T}}}\left(e^{it}\right). 
$$
Select a closed set $E_0^*\supset E_0$ such that $E_0^*$ has no common endpoints either with $E_0$ or with $E$. 

Consider any $\delta>0$ such that the intersection of $E$ with the $\delta$-neighborhood of $E_0$ is 
still subset of 
of $E_0^*$, 
and set $f_{k,n,t_0}(t):=T_n^{(k)}(t)Q(t)$, where $Q(t)=Q_{n^{1/3}}(t)$  is a fast decreasing trigonometric polynomial from Theorem~\ref{thm:fdtp} for  $t_0\in E_0$ ($\alpha'$ and $\beta'$ from
Theorem~\ref{thm:fdtp} are chosen such a way that 
the interval $[\alpha',\beta']$ is in the $\delta$-neighborhood of $E_0$).

By (\ref{fdtp:low}) and (\ref{roughMarkov}), for this $f_{k,n,t_0}$ we have the upper bound 
\begin{equation*}
O(n^{2k})\exp\left(-\delta_1n^{1/3}\right)\|T_n\|_{E}
=
o(1)\|T_n\|_{E}
\end{equation*}
on $E$ outside the $\delta$-neighborhood of $t_0$ with 
$\delta_1>0$ (uniform in $t_0\in E_0$).  

In the $\delta$-neighborhood of any $t_0\in E_0$,  by $\|Q\|_{E}\le 1$ and by induction hypothesis applied to $T_n$ and to $E_0^*$, we have
\begin{multline*}
|f_{k,n,t_0}(t)|
\le 
(1+o(1))n^{k} \|T_n\|_{E}V(t)^k
\le 
(1+o(1))n^k(1+\varepsilon)^k
\|T_n\|_E V(t_0)^k,
\end{multline*}
where $\varepsilon\rightarrow 0$ as $\delta\rightarrow 0$. 
Here we used that by the continuity of $V(t)$, if 
$t_0\in E_0$ 
and 
$|t-t_0|<\delta$, then $V(t)\le (1+\varepsilon)V(t_0)$ 
with some $\varepsilon$ that tends to $0$ as $\delta\rightarrow 0$. 
Therefore, $f_{k,n,t_0}(t)$ is a trigonometric polynomial in $t$ of degree at most $n+n^{1/3}$ for which 
\begin{equation*}
\|f_{k,n,t_0}\|
\le 
(1+o(1))n^k \|T_n\|_E V(t_0)^k.
\end{equation*}
Upon applying Lukashov's theorem from \cite[Theorem 4]{MR2069196} to the trigonometric polynomial $f_{k,n,t_0}(t)$ we obtain
\begin{equation}\label{eq:derforInd}
|f'_{k,n,t_0}(t_0)|
\le (1+o(1))n^{k+1}
\|T_n\|_E V(t_0)^{k+1}.
\end{equation}
Since (recall that $Q(t_0)=1$)
\begin{equation*}
f'_{k,n,t_0}(t_0)
=
T_n^{(k+1)}(t_0)
+T_n^{(k)}(t_0)(Q(t_0))',
\end{equation*}
and the second term on the right is 
at most 
$O(n^k)O(n^{2/3})\|T_n\|_E$ 
in modulus,
by \eqref{roughMarkov} and by the induction assumption, 
from \eqref{eq:derforInd} we get \eqref{eq:Bernstein}. 
It follows from the proof that the estimate is uniform in $t_0\in E_0$.

\end{proof}

\begin{corollary}\label{cor:hBernAlg} 
Let $E\subset (-\pi,\pi)$ be again a 
compact set 
and 
$k$ be a positive integer.
Fix a closed 
interval 
$E_0\subset \interior E$.
Then
there exists $C=C(E,E_0,k)>0$ such that
for all algebraic polynomial $P_n$ with degree $n$,
we have for 
$z=e^{it}$, $t\in E_0$ 
\begin{equation}
\label{roughBernsteinAlg}
\left|P_n^{(k)}(z)\right|
\le 
(1+o(1)) \frac{n^{k}}{2^k} 
\left(1+2\pi\omega_{E_{\bT}}(z)\right)^k
\left\|P_n\right\|_{E_\bT}  
\end{equation}
where $o(1)$ is
uniform in $z=e^{it}$, $t\in E_0$
and  independent of $P_n$, 
but it tends to $0$
as $n\rightarrow \infty$.
\end{corollary}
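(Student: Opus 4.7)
The plan is to deduce Corollary \ref{cor:hBernAlg} from the just-proved trigonometric Theorem \ref{thm:hBern} via the substitution $T(t) := e^{-int/2} P_n(e^{it})$ used already in Corollary \ref{cor:foralg}, but now at an interior point and for arbitrary order $k$. I would proceed by induction on $k$, with the base case $k=0$ trivial.

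Assume (as in Corollary \ref{cor:foralg}) that $n$ is even, and set $T(t) := e^{-int/2} P_n(e^{it})$, a real trigonometric polynomial of degree $n/2$ with $\|T\|_E = \|P_n\|_{E_\bT}$. Faà di Bruno's formula \eqref{faadibruno} applied to $P_n(e^{it})$ with outer function $P_n$ and inner function $e^{it}$ (whose $j$-th derivative is $i^j e^{it}$) produces a unique top-order contribution from the partition $m_1=k$, $m_2=\dots=m_k=0$, so that
\[
\frac{d^k}{dt^k} P_n(e^{it}) = i^k e^{ikt} P_n^{(k)}(e^{it}) + \sum_{M=0}^{k-1} c_{k,M}\, e^{iMt}\, P_n^{(M)}(e^{it}),
\]
with constants $c_{k,M}$ depending only on $k$ and $M$. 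On the other hand, Leibniz's rule applied to $P_n(e^{it}) = e^{int/2} T(t)$ gives
\[
\frac{d^k}{dt^k}\bigl[e^{int/2} T(t)\bigr] = e^{int/2} \sum_{j=0}^{k} \binom{k}{j} (in/2)^{k-j} T^{(j)}(t).
\]
Equating these two expressions, isolating $P_n^{(k)}(e^{it})$, and applying the triangle inequality yields
\[
\left|P_n^{(k)}(e^{it})\right| \le \sum_{j=0}^{k} \binom{k}{j} (n/2)^{k-j}\, |T^{(j)}(t)| + \sum_{M=0}^{k-1} |c_{k,M}|\, |P_n^{(M)}(e^{it})|.
\]

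For the inductive step, Theorem \ref{thm:hBern} applied to $T$ gives, uniformly in $t \in E_0$,
\[
|T^{(j)}(t)| \le (1+o(1))(n/2)^j \bigl(2\pi\omega_{E_\bT}(e^{it})\bigr)^j \|P_n\|_{E_\bT},
\quad j = 0, 1, \dots, k.
\]
Plugging these bounds into the first sum and invoking the binomial theorem, the $(n/2)^k$ prefactor emerges cleanly:
\[
\sum_{j=0}^{k} \binom{k}{j} (n/2)^{k-j}(n/2)^j \bigl(2\pi\omega_{E_\bT}(e^{it})\bigr)^j = (n/2)^k \bigl(1+2\pi\omega_{E_\bT}(e^{it})\bigr)^k.
\]
The remainder involving $P_n^{(M)}$ for $M < k$ is $O(n^{k-1})\|P_n\|_{E_\bT}$ by the inductive hypothesis, hence is absorbed into $o(1)\cdot (n/2)^k (1+2\pi\omega_{E_\bT})^k \|P_n\|_{E_\bT}$. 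This yields \eqref{roughBernsteinAlg}.

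The key mechanism, and essentially the only substantive point, is that the sharp Bernstein factor $(1+2\pi\omega_{E_\bT})$ assembles itself as a binomial expansion of two contributions: the "kinematic" factor $1$ coming from differentiating the phase $e^{int/2}$, and the "analytic" factor $2\pi\omega_{E_\bT}$ supplied by the trigonometric Bernstein bound on $T$. Uniformity of $o(1)$ in $t \in E_0$ is inherited directly from Theorem \ref{thm:hBern}, and the case of odd $n$ is handled by the same padding argument used in Corollary \ref{cor:foralg} (e.g., $T_n(t) := e^{-i(n-1)t/2} P_n(e^{it})$ as a half-integer trigonometric polynomial, or by perturbing $P_n$ by $\delta z^{n+1}$ and letting $\delta \to 0$).
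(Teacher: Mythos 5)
Your proposal is correct and follows essentially the same route as the paper: transfer to $T_{n/2}(t)=e^{-int/2}P_n(e^{it})$, apply Theorem \ref{thm:hBern} to its derivatives, and combine Leibniz with Fa\`a di Bruno so that the binomial identity assembles the factor $\left(1+2\pi\omega_{E_\bT}\right)^k$; your explicit induction on $k$ merely spells out how the paper absorbs the lower-order Fa\`a di Bruno terms. (The only slip, harmless since you take moduli, is that each Fa\`a di Bruno term carries the phase $e^{ikt}$ rather than $e^{iMt}$.)
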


\begin{proof}
As in the proof of 
Corollary 
\ref{cor:foralg}, we may assume that $n$ is even (because $(n+1)^2/n^2=1+o(1)$) and consider the trigonometric polynomial $T_{n/2}(t)=e^{-itn/2}P_n\left(e^{it}\right)$. 
By 
Theorem 
\ref{thm:hBern}, we get 
\begin{multline*}
(1+o(1))\frac{n^k}{2^k} 
\left(2\pi\omega_{E_{\mathbb{T}}}\left(e^{it}\right)\right)^k 
\|T_{n/2}\|_E
\ge 
|T^{(k)}_n\left(t\right)|
\\
\ge 
\left|\left(P_n\left(e^{it}\right)\right)^{(k)}\right|
-\left|\sum_{j=0}^{k-1}\binom{k}{j}\left(P\left(e^{it}\right)\right)^{(j)}
\left(e^{-itn/2}\right)^{(k-j)}\right|.
\end{multline*}
It, together with  Fa\`a di Bruno's formula 
\eqref{faadibruno} 
and
Theorem 
\ref{thm:hBern} yields that
\begin{multline*}
\left|P_n^{(k)}(z)\right|\le 
(1+o(1))
\frac{n^k}{2^k}\left( \left(2\pi\omega_{E_{\mathbb{T}}}(z)\right)^{k}+\sum_{j=0}^{k-1}\binom{k}{j} 
\left(2\pi\omega_{E_{\bT}}\left(z\right)\right)^{j} \right)
\|P_n\|_{E_\bT} 
\\
\le (1+o(1)) \frac{n^k}{2^k}(1+2\pi
\omega_{E_\bT}\left(z\right))^k 
\|P_n\|_{E_\bT}. 
\end{multline*}
\end{proof}

Corollary~\ref{cor:hBernAlg} extends Theorem 1 of the paper \cite{NTRiesz} 
to higher derivatives of algebraic polynomials and 
the 
proof of sharpness is similar to the proof of 
\cite{NTRiesz}, Theorem 2. 

\begin{theorem}
Under assumption of 
Corollary 
~\ref{cor:hBernAlg}, 
inequality (\ref{roughBernsteinAlg}) is sharp, 
that is, 
there is a sequence of polynomials $P_n \not\equiv 0$, $n=1,2,\dots$, such that
\begin{equation*}
\left|P_n^{(k)}(z)\right|
\ge 
(1-o(1)) \frac{n^{k}}{2^k} \left(1+2\pi\omega_{E_{\mathbb{T}}}\left(z\right)\right)^k
\left\|P_n\right\|_{E_{\mathbb{T}}} .
\end{equation*}
The quantity $o(1)$ depends on 
$E$ 
and $k$ and tends to $0$ as $n\rightarrow\infty$.
\end{theorem}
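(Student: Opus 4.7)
The plan is to extend the proof of \cite{NTRiesz}, Theorem~2 (the $k=1$ case) to higher derivatives, paralleling the framework of the sharpness argument in Theorem~\ref{highertrigMarkov}. The overall structure has three parts: reduction to a T-set, explicit extremal construction on the T-set, and limit passage.

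First, using Lemma~\ref{l:Omegalimit}, Ancona's theorem, and the density of T-sets inside finite unions of arcs, exactly as in the last two steps of Theorem~\ref{highertrigMarkov}'s proof, I would reduce to the case where $E$ defines a T-set $E=\{t:|U_N(t)|\le 1\}$ and $\omega_{E_\bT}(z_0)$ is within $\varepsilon$ of the prescribed value.

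Second, on the T-set I would construct an explicit algebraic extremal polynomial $P_n$. The first-derivative extremal of \cite{NTRiesz}, Theorem~2, is essentially a Faber-type polynomial asymptotically modeled on $\Phi(z)^n$, where $\Phi$ is the conformal map of $\overline{\bC}\setminus E_\bT$ onto $\{|w|>1\}$; it saturates $|P_n'(z_0)|\sim(n/2)(1+V(t_0))\|P_n\|_{E_\bT}$, with $V(t)=2\pi\omega_{E_\bT}(e^{it})$. To obtain sharpness of the $k$-th derivative, I would either iterate (showing that all derivatives of this Faber-type polynomial saturate the corresponding bound, as is trivially the case for $E_\bT=\bT$ and $P_n(z)=z^n$), or compose with a classical Chebyshev polynomial $\mathcal{T}_l$ to enhance the derivative behavior further, analogously to the Markov-sharpness argument in Theorem~\ref{highertrigMarkov} where $\mathcal{T}_l\circ U_N$ is used. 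The lower bound on $|P_n^{(k)}(z_0)|$ would then be computed by Faà di Bruno \eqref{faadibruno}, with the lower-order cross terms of order $n^{j}$ ($j<k$) controlled by the upper bound already established in Corollary~\ref{cor:hBernAlg}.

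Third, the T-set approximation is undone as in the final paragraph of Theorem~\ref{highertrigMarkov}'s sharpness proof by letting $N$ and the T-set parameter tend to infinity so that $\omega_{E_\bT^{(N)}}(z_0)\to\omega_{E_\bT}(z_0)$, yielding the required sequence of polynomials for the original compact set $E$.

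The main obstacle is verifying in the second step that all derivative contributions in the Faà di Bruno expansion of $P_n^{(k)}(z_0)$ align constructively, so that their sum achieves the full bound $(n/2)^k(1+V(t_0))^k\|P_n\|_{E_\bT}$. A naive composition such as $P_n(e^{it})=e^{ilNt}\mathcal{T}_l(U_N(t))$ produces only partial constructive interference on the algebraic side (despite being optimal in the trigonometric Bernstein setting), falling short of the algebraic bound by a factor depending on $V(t_0)$. The correct construction therefore requires the Faber-type polynomial of \cite{NTRiesz}, whose higher-derivative behavior — in particular, the fact that its successive derivatives at $z_0$ realize the iterated factor $(n/2)^k(1+V(t_0))^k$ with coherent phases — is the principal additional verification beyond the $k=1$ case.
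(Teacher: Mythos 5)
There is a genuine gap at the heart of your second step, and you essentially acknowledge it: the whole difficulty of this theorem is to produce a polynomial that attains the \emph{asymmetric} factor $\tfrac{n^k}{2^k}\left(1+2\pi\omega_{E_\bT}(z)\right)^k$, i.e.\ $n^k$ times the $k$-th power of the normal derivative $\partial g_{\overline{\bC}\setminus E_\bT}(z,\infty)/\partial \mathbf{n}_+=\tfrac12\left(1+2\pi\omega_{E_\bT}(e^{it})\right)$, and your proposal defers exactly this point to an unverified property of a ``Faber-type polynomial''. This is not a minor verification. First, for a general compact $E_\bT$ with several components there is no conformal map $\Phi$ of $\overline{\bC}\setminus E_\bT$ onto $\{|w|>1\}$ (the complement of two or more arcs is multiply connected), so the object your construction is ``modeled on'' does not exist in the generality of the theorem. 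Second, the reduction to T-sets does not help here: as you yourself note, polynomials built from the T-set machinery (e.g.\ $e^{ilNt}\mathcal{T}_l(U_N(t))$) only realize the symmetric Bernstein factor $\left(2\pi\omega_{E_\bT}\right)^k$ and fall short of the bound (\ref{roughBernsteinAlg}); so after your first and third steps you are left with precisely the same unconstructed extremal as before, and no argument is given for the ``coherent phases'' of the higher derivatives that you correctly identify as the crux.

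The paper closes this gap by a purely geometric construction that avoids T-sets and conformal maps altogether: enclose $E_\bT$ in a finite union $G$ of $C^2$ Jordan domains having $E_\bT$ as a boundary arc, compare normal derivatives of Green's functions as in \eqref{eq:normdercomp}, and then invoke the sharp form of Hilbert's lemniscate theorem (\cite{NTHilbert}, Theorem 1.2) to obtain a lemniscate $\sigma=\{\zeta:\,|V_N(\zeta)|=1\}$ touching $\partial G$ only at $z$ with almost the same normal derivative there. The extremal polynomials are then simply $P_n=V_N^{[n/N]}$: since $g_{\overline{\bC}\setminus\sigma}(\cdot,\infty)=\tfrac1N\log|V_N|$, Fa\`a di Bruno's formula \eqref{faadibruno} gives $|P_n^{(k)}(z)|=n^k\bigl(\tfrac1N V_N'(z)\bigr)^k+O(n^{k-1})$ with no cancellation issues (the leading term is a pure power of $V_N'(z)$), while $\|P_n\|_{E_\bT}\le\|P_n\|_\sigma=1$ by the maximum principle; the two normal-derivative comparisons and the identity $\partial g_{\overline{\bC}\setminus E_\bT}(z,\infty)/\partial\mathbf{n}_+=\tfrac12\left(1+2\pi\omega_{E_\bT}(z)\right)$ then yield the stated lower bound. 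If you want to salvage your outline, you should replace the Faber/T-set construction in your second step by this lemniscate-power construction (which is also what the proof of \cite{NTRiesz}, Theorem 2, actually does in the case $k=1$); without it, the proposal does not prove the theorem.
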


\begin{proof}
We enclose 
$E_\bT$ 
into a set $G$ with the following properties: 
\begin{itemize}
\item $G$ is a finite union of disjoint $C^2$ smooth Jordan domains: 
there are finitely many  disjoint $C^2$ Jordan curves $S_1,\dots, S_m$ such that if $G_j$ is the bounded connected components of $\overline{\mathbf{C}}\setminus S_j$, 
then $\overline{G}=\cup_{j=1}^m\overline{G}_j$,
\item 
$E_{\mathbb{T}}$ 
is a boundary arc of the boundary $\partial G$,
\item the component of $G$ that contains $z$ lies in the closed unit disk,
\item every point of $G$ is of distance $\le \eta$ from a point of 
$E_{\mathbb{T}}$, 
where $\eta$ is a given positive number.
\end{itemize}

Then the boundary $\Gamma=\partial G= \cup_{j=1}^m S_j$ is a family of disjoint Jordan curves. 
Furthermore, let $\mathbf{n}_+=z$ be the normal at $z$ to $\Gamma$ pointed to the interior of $\Omega=\overline{\mathbf{C}}\setminus G$. 

If  $\varepsilon>0$ is given, then for sufficiently small $\eta$ we have  
(see e.g.~\cite{NTHilbert}, pp.~350-351 
\begin{equation}\label{eq:normdercomp}
\frac{\partial g_{\Omega}(z,\infty)}{\partial \mathbf{n}_+} \ge (1-\varepsilon) \frac{\partial g_{\overline{\mathbf{C}}\setminus E_{\mathbb{T}}}(z,\infty)}{\partial \mathbf{n}_+}.
\end{equation}
 By the sharp form of the Hilbert lemniscate theorem 
\cite{NTHilbert}, Theorem 1.2, 
there is a Jordan curve $\sigma$ such that 
 \begin{itemize}
 \item $\sigma$ contains 
$\Gamma$ in its interior except for the point $z$, where the two curves touch each other,
\item $\sigma$ is a lemniscate, i.e. $\sigma=\{\zeta:\  |V_N(\zeta)|=1\}$ 
for some
algebraic polynomial $V_N$ of degree $N$, and 
\item \begin{equation}\label{eq:normdercomp1}
\frac{\partial g_{\overline{\bC}\setminus \sigma}(z,\infty)}{\partial \mathbf{n}_+}
\ge 
(1-\varepsilon)\frac{\partial g_{\Omega}(z,\infty)}{\partial \mathbf{n}_+}.
\end{equation}
 \end{itemize}
 
We may assume that  $V_N'(z)>0$. 
The Green's function of the outer domain of $\sigma$ is 
$\frac{1}{N}\log|V_N(.)|$, 
and its normal derivative is
\begin{equation*}
\frac{\partial g_{\overline{\bC}\setminus \sigma}(z,\infty)}{\partial \mathbf{n}_+} 
= \frac{1}{N} |V'_N(z)|
=\frac{1}{N} V'_N(z).
\end{equation*}

Consider now, for all large $n$, the polynomials $P_n(.)=V_N(.)^{[n/N]}$. 
This is a polynomial of degree at most $n$, 
its supremum norm on $\sigma$ is $1$, and 
by Fa\`a di Bruno formula \eqref{faadibruno},
it can be shown that 
(see also \cite{BJP}, subsection 10.2)  
\begin{equation*}
\left|P_n^{(k)}(z)\right|
=
n^k
\left(\frac{\partial g_{\overline{\bC}\setminus \sigma}(z,\infty)}{\partial \mathbf{n}_+}\right)^k
+O(n^{k-1}).
\end{equation*}
Thus, in view of (\ref{eq:normdercomp}) and (\ref{eq:normdercomp1}), we may continue 
\begin{equation*}
\left|P_n^{(k)}(z)\right|
\ge 
(1-\varepsilon)^{2k}n^k\left(\frac{\partial 
g_{\overline{\bC}\setminus E_{\bT}}
(z,\infty)}{\partial \mathbf{n}_+}\right)^k
+
O(n^{k-1}).
\end{equation*}
Note also that 
$\|P_n\|_{E_{\mathbb{T}}}\le \|P_n\|_{\sigma}=1$ 
by the maximum principle.
 \end{proof}

\begin{corollary}
Under assumption of 
Theorem 
\ref{thm:hBern}, inequality (\ref{eq:Bernstein}) is sharp, for there is a sequence of trigonometric polynomials $T_n \not\equiv 0$, $n=1,2,\dots$, such that
\begin{equation*}
\left|T_n^{(k)}(t)\right|
\ge 
(1-o(1)) n^{k} \left(2\pi\omega_{E_{\mathbb{T}}}\left(e^{it}\right)\right)^k
\left\|T_n\right\|_E. 
\end{equation*}
where $o(1)$ depends on $E$ and $k$ and tends to $0$ as $n\rightarrow\infty$.
\end{corollary}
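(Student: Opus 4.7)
The plan is to deduce the desired sharpness from the algebraic Bernstein-type sharpness we have just proved, by the standard transference $T_n(t) := e^{-int}P_{2n}(e^{it})$ applied to the extremal algebraic polynomial. Odd $n$ is handled by the familiar adjustment used in Corollary~\ref{cor:foralg}, so I will take $n$ even.

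First, I reuse the geometric set-up of the previous theorem: given $\varepsilon > 0$ and $z = e^{it}$, enclose $E_\bT$ in a smooth Jordan domain $G$ such that the component of $G$ containing $z$ lies in the closed unit disk and $E_\bT$ forms a boundary arc of $\partial G$; then invoke the sharp Hilbert lemniscate theorem to produce $V_N$ with $\sigma = \{|V_N|=1\}$ touching $\partial G$ at $z$ from outside and satisfying \eqref{eq:normdercomp1}. Put $P_{2n}(\zeta) := V_N(\zeta)^{[2n/N]}$ and $T_n(t) := e^{-int}P_{2n}(e^{it})$, a complex-valued trigonometric polynomial of degree at most $n$. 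The maximum principle gives $|V_N| \le 1$ on $\partial G$, hence on $E_\bT$, with equality at $z$, so $\|T_n\|_E = \|P_{2n}\|_{E_\bT} = 1$.

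The main computation is the asymptotics of $T_n^{(k)}(t)$. Writing $w = e^{is}$ and using the operator identity $(d/ds)^k = (iw\partial_w)^k = i^k \sum_{j=1}^k S(k,j)\, w^j \partial_w^j$ with $S(k,j)$ the Stirling numbers of the second kind, I would expand $\partial_w^j(w^{-n}P_{2n}(w))$ by the Leibniz rule and plug in the leading-order Fa\`a di Bruno estimate $P_{2n}^{(r)}(w) = [2n/N]^r V_N(w)^{[2n/N]}(V_N'(w)/V_N(w))^r + O(n^{r-1})$. The inner binomial sum collapses neatly, producing
\begin{equation*}
T_n^{(k)}(t) \;=\; i^k\,e^{-int}\,V_N(z)^M\,\bigl(M\,\alpha(z)\,z - n\bigr)^{k} \;+\; O(n^{k-1}),
\end{equation*}
where $M = [2n/N]$ and $\alpha := V_N'/V_N$. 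The conceptual core is the tangency: since $G$ lies in the unit disk and $\partial G$ is tangent to $\bT$ at $z$, the outward unit normal of $G$ at $z$ is $z$ itself; the outward normal of $\sigma$ at $z$ must agree with this, and computing it as $\overline{\alpha(z)}/|\alpha(z)|$ (from $\nabla\log|V_N| = \overline{V_N'/V_N}$) forces $\alpha(z)\,z = |\alpha(z)| = |V_N'(z)|$, a positive real number. By $|V_N'(z)|/N = \partial g_{\overline{\bC}\setminus\sigma}(z,\infty)/\partial\mathbf{n}_+$ and the chain of comparisons \eqref{eq:normdercomp}, \eqref{eq:normdercomp1} already established in the previous theorem, this exceeds $(1-\varepsilon)^2(1+V(t))/2$ with $V(t) := 2\pi\omega_{E_\bT}(e^{it})$, whence $M\alpha(z)z - n \ge n\,V(t) - O(\varepsilon\, n)$, giving $|T_n^{(k)}(t)| \ge (1 - o(1))\,n^k\,V(t)^k\,\|T_n\|_E$. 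Letting $\varepsilon = \varepsilon_n \to 0$ slowly (as in the closing step of Theorem~\ref{highertrigMarkov}) yields the desired sequence. If a real trigonometric polynomial is required, I would replace $T_n$ by $\operatorname{Re}(e^{i\tau_n}T_n)$ with $\tau_n$ chosen to make $e^{i\tau_n}T_n^{(k)}(t)$ real, which preserves both the norm bound and $|T_n^{(k)}(t)|$.

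The main obstacle will be the book-keeping for error terms: ensuring that the subleading pieces from Fa\`a di Bruno applied to $P_{2n}^{(r)}$, the correction $(n+i-1)!/(n-1)! - n^i$ in the Leibniz rule, and the Stirling terms with $j < k$ all collectively contribute only $O(n^{k-1})$ and do not spoil the leading constant. Conceptually, the most delicate point is the tangency calculation, which is exactly what converts the algebraic rate $(1+V(t))/2$ into the trigonometric rate $V(t)$ via the identity $2A - 1 = V$ for $A = (1+V)/2$; this depends crucially on choosing $G$ inside (rather than outside) the unit disk, as the opposite orientation would give $-|\alpha|$ instead of $+|\alpha|$ and a nonsensical rate.
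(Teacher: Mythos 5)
Your proposal is correct, and it rests on the same underlying mechanism as the paper: transfer the extremal algebraic polynomials $P_{2n}$ of the preceding sharpness theorem (powers $V_N^{[2n/N]}$ of the lemniscate polynomial) to trigonometric polynomials via $T_n(t)=e^{-int}P_{2n}(e^{it})$, and exploit the tangency of $\sigma$ to the unit circle at $z$ so that the algebraic rate $\tfrac{1}{2}\bigl(1+2\pi\omega_{E_\bT}(z)\bigr)$ turns into the trigonometric rate $2\pi\omega_{E_\bT}(z)$ after the shift by $n$. The difference is in how the lower bound is extracted. The paper's proof is a one-line reduction: it takes the $P_{2n}$ already constructed and, "in the sense of the proof of Corollary~\ref{cor:hBernAlg}", expands $T_n^{(k)}$ by the Leibniz rule and removes the lower-order terms using the upper bounds already at hand (Theorem~\ref{thm:hBern} for derivatives of order $<k$ of $T_n$ and Corollary~\ref{cor:hBernAlg} for $P_{2n}$), i.e.\ from $(1-o(1))n^k(1+V)^k\le |T_n^{(k)}(t)|+(1+o(1))n^k\bigl((1+V)^k-V^k\bigr)$ one reads off $|T_n^{(k)}(t)|\ge(1-o(1))n^kV^k$ with $V(t)=2\pi\omega_{E_\bT}(e^{it})>0$ fixed. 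You instead recompute the asymptotics of $T_n^{(k)}(t)$ from scratch on the explicit polynomials, re-running the lemniscate construction and proving the tangency identity $zV_N'(z)/V_N(z)=|V_N'(z)|$ so that the logarithmic derivative $M z\alpha(z)-n$ is real, positive and $\ge nV(t)-O(\varepsilon n)$; this is heavier bookkeeping (and duplicates the previous theorem's work), but it is self-contained, makes the crucial cancellation visible, and does not need the upper-bound results at the final step. Both routes are valid; your error analysis, the norm identity $\|T_n\|_E=\|P_{2n}\|_{E_\bT}=1$, the diagonalization $\varepsilon_n\to0$, and the remark on taking real parts are all sound (the parity adjustment you mention is not even needed here, since the algebraic polynomial has even degree $2n$ by construction).
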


\begin{proof}
Existence 
of such trigonometric polynomials $T_{n}$ follows 
immediately from the existence 
of corresponding (in the sense of the proof of Corollary~\ref{cor:hBernAlg}) algebraic polynomials $P_{2n}$ from 
Corollary 
\ref{cor:hBernAlg}.
\end{proof}

\section*{Acknowledgement} 
The research of the first author has been supported by Russian Science Foundation 
under project 14-11-00022
(in the part concerning polynomial inequalities).

The second author was supported by the János Bolyai 
Scholarship of Hungarian Academy of Sciences.

\providecommand{\bysame}{\leavevmode\hbox to3em{\hrulefill}\thinspace}
\providecommand{\MR}{\relax\ifhmode\unskip\space\fi MR }
% \MRhref is called by the amsart/book/proc definition of \MR.
\providecommand{\MRhref}[2]{%
  \href{http://www.ams.org/mathscinet-getitem?mr=#1}{#2}
}
\providecommand{\href}[2]{#2}

Sergei Kalmykov
\\
School of Mathematical Sciences, Shanghai Jiao Tong University,
800 Dongchuan RD, Shanghai, 200240, P.R. China 
\\
Far Eastern Federal University, 8 Sukhanova Street, Vladivostok,
690950, Russia
\\
email address: \href{mailto:sergeykalmykov@inbox.ru}{sergeykalmykov@inbox.ru}

\medskip{}

Béla Nagy
\\
MTA-SZTE Analysis and Stochastics Research Group, Bolyai Institute,
University of Szeged, Szeged, Aradi v. tere 1, 6720, Hungary
\\
email address: \href{mailto:nbela@math.u-szeged.hu}{nbela@math.u-szeged.hu}

\end{document}